\newcommand{\bs}{\boldsymbol}
\newcommand{\mrm}{\mathrm}
\def\cequiv{\raisebox{-1.5mm}{$\;\stackrel{\raisebox{-3.9mm}{=}}{{\sim}}\;$}}
\def\r{\mathbf{r}}
\def\s{\mathbf{s}}
\newtheorem{theorem}{Theorem}
\newtheorem{remark}[theorem]{Remark}
\newtheorem{lemma}[theorem]{Lemma}
\newcounter{mnote}
\let\oldmarginpar\marginpar
\renewcommand\marginpar[1]{\-\oldmarginpar[\raggedleft\footnotesize #1]%
  {\raggedright\footnotesize #1}}
\def\r{\mathbf{r}}
\def\s{\mathbf{s}}
\def\dx{\mathrm{dx}}
\def\dS{\,\mathrm{ds}\,}
\def\q{\mathcal{Q}}
\def\m{\mathfrak{mix}}
\begin{document}

\title[Stable Stokes finite element pair on quadrilateral grids]{Stable finite element pair for Stokes problem and discrete Stokes complex on quadrilateral grids}


\author{
      Shuo Zhang 
}

\address{LSEC, Institute of Computational Mathematics, Academy of Mathematics and System Sciences, Chinese Academy of Sciences, Beijing 100190, China.}
\email{szhang@lsec.cc.ac.cn}
\thanks{The author is partially supported by NSFC11101415 and  National Center for Mathematics and Interdisciplinary Sciences(NCMIS).}

\keywords{incompressible Stokes problem, quadrilateral grid, stable finite element pair, Stokes complex}
\subjclass[2010]{65M60, 76M10}


\maketitle

\begin{abstract}
{
In this paper, we first construct a nonconforming finite element pair for incompressible Stokes problem on quadrilateral grids, and then construct a discrete Stokes complex associated with that finite element pair. The finite element spaces involved consist of piecewise polynomials only, and the divergence-free condition is imposed in a primal formulation. Combined with some existing results, these constructions can be generated onto grids that consist of both triangular and quadrilateral cells. 
}


\end{abstract}


\section{Introduction}

The Stokes problem is an important model problem in applied sciences, which can be used to describe the motion of an incompressible fluid. In this paper, we study the stable finite element method for the two dimensional stationary incompressible Stokes problem of velocity-pressure type. In this context, a stable finite element method of the model problem implies a pair of finite element spaces that are consistent approximations to the Hilbert spaces $(H^1(\Omega))^2$ and $L^2(\Omega)$, respectively, and satisfy the two stability conditions which are, with the detailed technical 
description given in the following section, 

~~~~\textbf{SC 1}: the coercive condition and inf-sup condition hold uniformly;

~~~~\textbf{SC 2}: the divergence-free constraint is imposed in a primal formulation.
\\
The first condition falls into the classical theory of Stokes problem, as it provides a necessary and sufficient condition for the well-posedness of the discrete problem; see, e.g., \cite{Girault.V;Raviart.P,Babuska.I1973,Brezzi1974}. The second condition, sometimes known as mass conservation, is also desirable in many applications. Though it is not yet fully revealed how methods that enforce divergence-free would be superior to those that do not, satisfying the property can decouple the pressure error from the velocity error, and can avoid possible instabilities that can arise from violation of mass conservation \cite{Auricchio.F;daVeiga.L;Lovadina.C;Reali.A2010,Cousins.B;Borne.S;Linke.A;Rebholz.L;Wang.Z2013,Linke.A2009,Linke.A;Matthies.G;Tobiska.L2008}. Various approaches have been utilised to develop methods in regard to the conditions both \textbf{SC 1} and \textbf{SC 2}, including the discontinuous Galerkin methods \cite{Carrero.J;Cockburn.B;Schotzau.D2006,Cockburn.B;Kanschat.G;Schotzau.D2007,Cockburn.B;Kanschat.G;Schotzau.D2004,Cockburn.B;Kanschat.G;Schotzau.D2005}, the isogeometric methods \cite{Evans.J;Hughes.T2012,Evans.J;Hughes.T2013}, the least square finite element methods \cite{Bochev.P;Lai.J;Olson.L2013,Bolton.P;Thatcher.R2005,Chang.C;Nelson.J1997,Heys.J;Lee.E;Manteuffel.T;McCormick.S2006}, and finite element methods with enhanced stabilisation\cite{Bejanov.B;Guermond.J;Minev.P2005,Boffi.D;Cavallini.N;Gardini.F;Gastaldi.L2012,Burman.E;Linke.A2008,Case.M;Ervin.V;Linke.A;Rebholz.L2011,Olshanskii.M;Reusken.A2004}. In this paper, we focus ourselves on the traditional finite element methods, and will not mention other approaches too much.

A few finite element pairs have been reported to satisfy the conditions both \textbf{SC 1} and \textbf{SC 2}. As a natural idea, the conforming $P_k^2-P_{k-1}$ pairs were constructed for $k\geqslant2$. They are proved to satisfy \textbf{SC 1} and \textbf{SC 2} on special types of uniform or quasi-uniform triangulations (Scott-Vogelius\cite{Scott.L;Vogelius.M1985,Scott.L;Vogelius.M1985LAM} for $k\geqslant 4$, Arnold-Qin\cite{Arnold.D;Qin.J1992} for $k=2$, and Qin \cite{Qin.J1994} for $k=3$). By adding extra smoothness to the finite element functions on the vertices other than the corners of the domain, Falk-Neilan \cite{Falk.R;Neilan.M2013} designed a special family of $P_k^2-P_{k-1}$ pairs for $k\geqslant 4$ that are shown to satisfy \textbf{SC 1} and \textbf{SC 2} on general grids without the so-called singular corner vertices. On general triangular grids, Crouzeix-Raviart \cite{Crouzeix.M;Raviart.P1973} constructed a nonconforming $P_1^2-P_0$ element which satisfies both \textbf{SC 1} and \textbf{SC 2} in a nonconforming way. A similar nonconforming $P_2^2-P_1$ element was constructed by Fortin-Soulie\cite{Fortin.M;Soulie.M1983}. Another natural idea is, for a given velocity space, using its divergence space as the pressure space, and/or reversely, for a given pressure space, looking for a velocity space so that divergence is a surjection. This idea succeeds where nodal basis functions can be constructed, such as the $Q_{k+1,k}\times  Q_{k,k+1}-Q_{k}$ pair on rectangular grid (see Zhang \cite{Zhang.S2009} for $k\geqslant 2$ and Huang-Zhang \cite{Huang.Y;Zhang.S2011} for $k=1$), and the Mardal-Tai-Winther pair\cite{Mardal.K;Tai.X;Winther.R2002} on triangular grids, which uses a space of vector functions rather than a tensor product of two scalar function spaces to approximate the velocity field. Kouhia-Stenberg \cite{Kouhia.K;Stenberg.R1995} also used different function spaces for different components of the velocity and constructs a stable linear element method. Xie-Xu-Xue\cite{Xie.X;Xu.J;Xue.G2008} made a way to add divergence-free basis functions onto $H(\mathrm{div})$-conforming finite element space to generate the velocity function space, and generate and survey several stable pairs in a unified way. Guzm\'an-Neilan \cite{Guzman.J;Neilan.M} constructed the velocity spaces by adding \emph{rational} divergence-free functions to $H(\mathrm{div})$-conforming functions, and obtained conforming stable pairs. Beside these examples, there have been many finite element pairs that satisfy the condition \textbf{SC 1}, while satisfy the divergence-free condition \textbf{SC 2} in a dual formulation; for these pairs, see \cite{Boffi.D;Brezzi.F;Fortin.M2008,Brezzi.F;Fortin.M1991,Girault.V;Raviart.P,Rannacher.R2000} and the references therein. 

Principally, the condition \textbf{SC 2} decouples the computations of pressure and velocity, and it would bring convenience once we can present a precise description of the divergence-free velocity subspace. Mathematically, a divergence-free function can be the curl of some other function\cite{Girault.V;Raviart.P}. This is relevant to the fundamental observation that the incompressible velocity field admits a stream function, and this gives a natural connection between the incompressible Stokes problem and the biharmonic equation. This property can be described in the framework of the Stokes complex originally introduced by \cite{Mardal.K;Tai.X;Winther.R2002,Tai.X;Winther.R2006}. There have been various complexes to describe different physical and mathematical observations\cite{Arnold.D;Falk.R;Winther.R2006m,Arnold.D;Falk.R;Winther.R2006}. A powerful tool to design, analyse, understand, and moreover, to apply the stable finite element pair for the Stokes problem is then to reproduce discrete analogous of the Stokes complex where the Sobolev spaces are replaced by corresponding finite element spaces. The existence of such a structure like the discrete Stokes complex makes the connection between the model problems revealed at discrete level, and wider scope of methods and applications of the model problems can be expected. This structure is an intrinsic connection between the finite element pairs, and some of the existing pairs that satisfy both \textbf{SC 1} and \textbf{SC 2} have been shown to be associated with specific discrete Stokes complexes. On triangular grids, for instance, discrete Stokes complexes have been established associated with the conforming $P_k^2-P_{k-1}$ element, with(\cite{Guzman.J;Neilan.M}) or without(\cite{Scott.L;Vogelius.M1985}) extra smoothness on vertices, and the nonconforming $P_1^2-P_0$ element pair \cite{Falk.R;Morley.E1990}, respectively. Different discrete Stokes complexes were also introduced in \cite{Mardal.K;Tai.X;Winther.R2002} and \cite{Guzman.J;Neilan.M}, respectively. While when quadrilateral grids are considered, few discrete Stokes complexes are known.


As the quadrilateral grids are widely used where the problem geometry is of quadrilateral nature, in this paper, we study the stable finite element method for Stokes problem that satisfy conditions both \textbf{SC 1} and \textbf{SC 2} on quadrilateral grids. Specifically, as it is seen among the existing methods that the nonconforming methodology would in general admit higher flexibility, we develop a nonconforming finite element pair that satisfies both \textbf{SC 1} and \textbf{SC 2} in a nonconforming way, and then construct a discrete Stokes complex associated with this element pair.  After carry out the discussion on quadrilateral grids, we will then carry out the discussion on grids consisting of both triangular and rectangular cells to obtain parallel analogue results.

On the quadrilateral grids, we use an average continuous piecewise incomplete quadratic polynomial space for the velocity field. The velocity space is the same as the one  used for solving the Poisson equation in Lin-Tobiska-Zhou \cite{Lin.Q;Tobiska.L;Zhou.A2005} on rectangular grid, while the nodal parameters of the velocity space had been used by Han \cite{Han.H1984} with different shape function space on rectangle grids. The same velocity space was also used in Shi-Zhang \cite{Shi.D;Zhang.Y2006} for rectangular grids to shape a finite element pair for the Stokes problem, where the pressure is approximated by piecewise constant, and the condition \textbf{SC 2} holds in a dual formulation. Shi-Zhang's element also relies on a bilinear mapping between the cell and a reference rectangle when forming the shape functions on quadrilateral cells. In this present paper, applying the idea in \cite{Park.C;Sheen.D2013}, we define the finite element functions on quadrilateral cells directly, and they are all piecewise polynomials. We use discontinuous piecewise linear polynomial space for the pressure, and both the conditions \textbf{SC 1} and \textbf{SC 2} are satisfied.

A discrete Stokes complex is then constructed based on the newly established finite element pair as we prove that the divergence-free part of the discrete velocity space is piecewisely the curl of a quadrilateral Morley element space. The Morley element was originally constructed on triangular grids to solve the fourth order problem with piecewise quadratic polynomials\cite{Morley.L1968}, and generalized to arbitrary dimension by Wang-Xu \cite{Wang.M;Xu.J2006}, and to elliptic problems of arbitrary order by Wang-Xu \cite{WangXu2012}. Using the same nodal parameters as the Morley element, Wang-Shi-Xu\cite{Wang.M;Shi.Z;Xu.J2007} constructs a rectangle Morley element, which is generalised by Park-Sheen \cite{Park.C;Sheen.D2013} to general convex quadrilateral grids. The Morley element was proved to be associated with a discrete Stokes complex on triangles together with the nonconforming $P_1^2-P_0$ element \cite{Falk.R;Morley.E1990,Feng.C;Xu.J;Zhang.S2013}.  In this present paper, we show that a discrete Stokes complex connects the quadrilateral Morley element and the newly-developed Stokes element pair on quadrilateral grids. 

Because of the similarity of the nodal parameters of the finite elements established on triangular and quadrilateral cells, which implies the same continuity of finite element functions on triangular and quadrilateral triangulations, it is then natural to combine these finite elements together to form discretisation schemes for the biharmonic problem and the Stokes problem, respectively, on a mixed grid involving both triangular and quadrilateral cells. Since the nodal interpolations are defined locally, this combination is straightforward, and finally a same discrete Stokes complex is also established on the mixed grid.

%
The rest of the paper is as follows. In Section \ref{sec:pre}, we introduce some preliminaries including the model problems and general finite element discretisation. In Section \ref{sec:stap}, we introduce a stable finite element pair on quadrilateral grids, and construct a discrete Stokes complex. In Section \ref{sec:mixgrid}, we carry out the discussion on a mixed grid. Finally, conclusions are given in Section \ref{sec:con}. 


%
%
\section{Preliminaries}
\label{sec:pre}

\subsection{Stokes problem and the Stokes complex}

Let $\Omega\subset\mathbb{R}^2$ be a Lipschitz domain, and $\Gamma=\partial\Omega$ be the boundary, with $\mathbf{n}$ the outward unit normal vector. We consider the incompressible Stokes problem with homogeneous boundary condition:
\begin{equation}\label{eq:stokesbvp}
\left\{\begin{array}{rl}
-\nu\Delta\mathbf{u}+\nabla p=\mathbf{f} & \mbox{in}\,\Omega, \\
\nabla\cdot\mathbf{u}=0 &\mbox{in}\,\Omega, \\
\mathbf{u}=0 &\mbox{on}\,\partial\Omega.
\end{array}
\right.
\end{equation}
Here $\nu$ is the kinematic viscosity, $\mathbf{u}$, $p$, and $\mathbf{f}$ denote the velocity, the pressure, and the external body force, respectively, and $\Delta$ and $\nabla$ are the Laplacian and gradient operators, respectively. For simplicity, we set $\nu=1$ in the rest of the paper.  

Denote by $H^1(\Omega)$, $H^1_0(\Omega)$, $H^2(\Omega)$, and $H^2_0(\Omega)$ the standard Sobolev spaces as usual, and $L^2_0(\Omega):=\{w\in L^2(\Omega):\int_\Omega w\dx=0\}$. The variational form of \eqref{eq:stokesbvp} is to find $(\mathbf{u},p)\in (H^1_0(\Omega))^2\times L^2_0(\Omega)$, such that 
\begin{equation}\label{eq:stokesvp}
\left\{
\begin{array}{rll}
(\nabla\mathbf{u},\nabla\mathbf{v})+(\nabla\cdot\mathbf{v},p)&=(\mathbf{f},\mathbf{v}) & \forall\,\mathbf{v}\in (H^1_0(\Omega))^2, \\ 
(\nabla\cdot\mathbf{u},q)&=0 & \forall\,q\in L^2_0(\Omega).
\end{array}
\right.
\end{equation}
Here $(\nabla \mathbf{u},\nabla\mathbf{v})=\int_\Omega\sum_{i,j=1}^2(\nabla\mathbf{u})_{ij}(\nabla\mathbf{u})_{ij}\dx$, and $(\mathbf{f},\mathbf{v})=\int_\Omega\sum_{i=1}^2\mathbf{f}_i\mathbf{v}_i\dx$. The well-posedness of \eqref{eq:stokesvp} is guaranteed by the facts\cite{Girault.V;Raviart.P}:
\begin{equation}\label{eq:contsta}
(\nabla\mathbf{v},\nabla\mathbf{v})\geqslant C_1 \|\mathbf{v}\|_{1,\Omega}^2\ \ \forall\,\mathbf{v}\in (H^1_0(\Omega))^2,\ \ \mbox{and}\ \ \inf_{0\neq q\in L^2_0(\Omega)}\sup_{\mathbf{0}\neq\mathbf{v}\in (H^1_0(\Omega))^2}\frac{(\nabla\cdot\mathbf{v},q)}{\|\mathbf{v}\|_{1,\Omega}\|q\|_{0,\Omega}}\geqslant C_2,
\end{equation}
with $C_1$ and $C_2$ two positive constants dependent on the domain only. 


The biharmonic problem associated with the Stokes problem \eqref{eq:stokesbvp} is:
\begin{equation}\label{eq:bihbvp}
\left\{
\begin{array}{ll}
\displaystyle\Delta^2\varphi=F\in H^{-2}(\Omega),&\mbox{in}\,\Omega, \\
\displaystyle\varphi=\frac{\partial\varphi}{\partial\mathbf{n}}=0, &\mbox{on}\,\partial\Omega.
\end{array}
\right.
\end{equation}
The variational problem is to find $\varphi\in H^2_0(\Omega)$, such that 
\begin{equation}
(\nabla^2\varphi,\nabla^2\psi)=F(\psi),\ \ \forall\,\psi\in H^2_0(\Omega).
\end{equation}
Here $(\nabla^2\varphi,\nabla^2\psi)=\int_\Omega\sum_{i,j=1}^2(\nabla^2\varphi)_{ij}(\nabla^2\psi)_{ij}\dx$.
Denote by $\mathbf{curl}$ the curl operator on a scalar function, which is the rotation of the gradient operator $\nabla$. Define for vector functions the operators $\mathrm{div}=\nabla\cdot$ and $\mathrm{curl}=\mathbf{curl}\cdot$.  We have the basic relation below.
\begin{lemma}\label{lem:conbihsto}\cite{Girault.V;Raviart.P}
Let $\Omega$ be simply connected.
\begin{enumerate}
\item $\mathbf{curl} H^2_0(\Omega)=\{\mathbf{v}\in (H^1_0(\Omega))^2:\mathrm{div}\mathbf{v}=0\}$, and $\mathrm{div} (H^1_0(\Omega))^2=L^2_0(\Omega). $
\item Let $(\mathbf{u},p)$ be the solution of \eqref{eq:stokesvp}, and $\varphi$ be the solution of \eqref{eq:bihbvp}, with $F(\psi)=(\mathbf{f},\mathbf{curl}\psi)$ for $\psi\in H^2_0(\Omega)$. Then $\mathbf{u}=\mathbf{curl}\varphi$.
\end{enumerate}
\end{lemma}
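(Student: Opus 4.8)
**The plan is to prove Lemma \ref{lem:conbihsto} as a classical fact from the theory of the Stokes problem, following de Rham-type arguments.**

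For part (1), I would begin with the inclusion $\mathbf{curl} H^2_0(\Omega)\subseteq\{\mathbf{v}\in (H^1_0(\Omega))^2:\mathrm{div}\,\mathbf{v}=0\}$, which is immediate: if $\psi\in H^2_0(\Omega)$ then $\mathbf{curl}\,\psi\in (H^1_0(\Omega))^2$ (the trace of $\nabla\psi$ vanishes on $\partial\Omega$ since both $\psi$ and $\partial\psi/\partial\mathbf{n}$ do, hence the tangential derivative vanishes too), and $\mathrm{div}\,\mathbf{curl}\,\psi=0$ pointwise. For the reverse inclusion, I would take $\mathbf{v}\in (H^1_0(\Omega))^2$ with $\mathrm{div}\,\mathbf{v}=0$, and use the simple connectedness of $\Omega$ together with the Poincar\'e lemma (or rather its $H^1$-regular version, e.g.\ Theorem I.3.1 in Girault--Raviart) to produce a stream function $\psi\in H^2(\Omega)$, unique up to a constant, with $\mathbf{curl}\,\psi=\mathbf{v}$. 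The remaining point is to fix the constant so that $\psi\in H^2_0(\Omega)$: since $\mathbf{v}$ vanishes on $\partial\Omega$, the tangential derivative of $\psi$ along $\partial\Omega$ vanishes, so $\psi$ is constant on $\partial\Omega$ (here one uses connectedness of $\partial\Omega$, or argues componentwise on each boundary component), and the normal derivative of $\psi$ on $\partial\Omega$ equals $\pm$ the tangential component of $\mathbf{v}$, which is zero; subtracting the boundary constant gives $\psi\in H^2_0(\Omega)$. The second identity $\mathrm{div}(H^1_0(\Omega))^2=L^2_0(\Omega)$ is the surjectivity statement equivalent to the continuous inf-sup condition \eqref{eq:contsta}: the image is contained in $L^2_0(\Omega)$ by the divergence theorem, and it is closed and dense (hence everything) by the inf-sup inequality, which is exactly the cited fact of Girault--Raviart.

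For part (2), I would argue as follows. Let $\varphi\in H^2_0(\Omega)$ solve the biharmonic problem with $F(\psi)=(\mathbf{f},\mathbf{curl}\,\psi)$. Set $\mathbf{w}:=\mathbf{curl}\,\varphi$; by part (1), $\mathbf{w}\in (H^1_0(\Omega))^2$ with $\mathrm{div}\,\mathbf{w}=0$, so $\mathbf{w}$ is a candidate velocity. Testing \eqref{eq:stokesvp} only against divergence-free $\mathbf{v}=\mathbf{curl}\,\psi$, $\psi\in H^2_0(\Omega)$, kills the pressure term and reduces the Stokes problem to: find $\mathbf{u}$ divergence-free in $(H^1_0(\Omega))^2$ with $(\nabla\mathbf{u},\nabla\mathbf{curl}\,\psi)=(\mathbf{f},\mathbf{curl}\,\psi)$ for all $\psi\in H^2_0(\Omega)$. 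I would then check that $\mathbf{w}=\mathbf{curl}\,\varphi$ satisfies this identity by verifying the algebraic relation $(\nabla\mathbf{curl}\,\varphi,\nabla\mathbf{curl}\,\psi)=(\nabla^2\varphi,\nabla^2\psi)$ — this is a pointwise identity between the two $2\times2$ matrices $\nabla\mathbf{curl}\,\varphi$ and $\nabla^2\varphi$ (the former is the latter composed with a rotation, so their Frobenius inner products agree), after which $(\nabla\mathbf{curl}\,\varphi,\nabla\mathbf{curl}\,\psi)=(\nabla^2\varphi,\nabla^2\psi)=F(\psi)=(\mathbf{f},\mathbf{curl}\,\psi)$. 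Uniqueness of the divergence-free velocity in \eqref{eq:stokesvp} (from coercivity in \eqref{eq:contsta}) then forces $\mathbf{u}=\mathbf{w}=\mathbf{curl}\,\varphi$.

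The main obstacle is the boundary-condition bookkeeping in part (1): producing the stream function in $H^2$ from a divergence-free $H^1$ field on a merely simply connected Lipschitz domain, and then correctly identifying its boundary traces to conclude $\psi\in H^2_0(\Omega)$ rather than just $H^2(\Omega)$. Everything else is either a pointwise linear-algebra identity (the rotation relating $\nabla\mathbf{curl}$ and $\nabla^2$) or a direct appeal to the well-posedness facts \eqref{eq:contsta} already quoted from \cite{Girault.V;Raviart.P}; indeed, the whole lemma is standard and I would ultimately cite Girault--Raviart for the technical core, presenting only the short reductions above.
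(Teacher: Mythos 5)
The paper gives no proof of this lemma at all---it is simply quoted from Girault--Raviart---so there is no internal argument to compare against; your sketch is the standard proof from that reference and is correct, including the two points that genuinely need care: identifying the stream function's boundary traces to land in $H^2_0(\Omega)$ (using connectedness of $\partial\Omega$ for a simply connected planar domain), and the pointwise Frobenius identity $(\nabla\mathbf{curl}\,\varphi,\nabla\mathbf{curl}\,\psi)=(\nabla^2\varphi,\nabla^2\psi)$ that reduces the Stokes problem on the divergence-free subspace to the biharmonic problem. Your plan to cite Girault--Raviart for the technical core and supply only the short reductions matches exactly how the paper treats this statement.
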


We can rewrite Lemma \ref{lem:conbihsto} in the form of the Stokes complex \cite{Mardal.K;Tai.X;Winther.R2002,Tai.X;Winther.R2006,Falk.R;Neilan.M2013} which reads
\begin{equation}
\begin{array}{ccccccccc}
0 & ~~~\longrightarrow~~~ & H^2_0(\Omega) & ~~~\xrightarrow{\bs{\mrm{curl}}}~~~ & (H^1_0(\Omega))^2 & ~~~\xrightarrow{\mrm{div}}~~~ & L^2_0(\Omega)  & ~~~\rightarrow~~~ & 0.  
\end{array}
\end{equation}
In this sequence, the composition of two consecutive
mappings is zero, and the range of each map is the null space of the succeeding map in a simply connected domain. 

\subsection{Finite element method for Stoke problem}

When the Sobolev spaces $(H^1_0(\Omega))^2$ and $L^2_0(\Omega)$ are replaced by some finite element spaces $\mathbf{V}_{h0}$ and $\mathring{W}_h$, conforming or nonconforming, we have the finite element problem: find $(\mathbf{u}_h,p_h)\in \mathbf{V}_{h0}\times \mathring{W}_h$, such that 
\begin{equation}\label{eq:stokesfep}
\left\{
\begin{array}{rll}
(\nabla_h\mathbf{u}_h,\nabla\mathbf{v}_h)+(\mathrm{div}_h\mathbf{v}_h,p_h)&=(\mathbf{f},\mathbf{v}_h) & \forall\,\mathbf{v}_h\in \mathbf{V}_{h0}, \\ 
(\mathrm{div}_h\mathbf{u}_h,q_h)&=0 & \forall\,q_h\in \mathring{W}_h.
\end{array}
\right.
\end{equation}
Here $\nabla_h$ and $\mathrm{div}_h$ are in the piecewise sense for nonconforming $V_{h0}$.

We define two stable conditions for the finite element scheme. The subscript $``h"$ in the norms implies the dependence of the triangulation.

~~~~\textbf{SC 1}: There exist two positive constants $\gamma_1$ and $\gamma_2$, such that
$$
\displaystyle\inf_{\mathbf{v}_h\in\mathbf{Z}_h,\mathbf{v}_h\neq0}\frac{(\nabla_h\mathbf{v}_h,\nabla_h\mathbf{v}_h)}{\|\mathbf{v}_h\|_{1,h}^2}:=\gamma_h^1>\gamma_1\ \mbox{on}\ \mathbf{Z}_h:=\{\mathbf{v}_h\in \mathbf{V}_{h0}:(\mathrm{div}_h\mathbf{v}_h,q_h)=0,\ \forall\,q_h\in \mathring{W}_h\}
$$ 
and $\displaystyle\inf_{0\neq q_h\in \mathring{W}_h}\sup_{\mathbf{0}\neq\mathbf{v}_h\in\mathbf{V}_{h0}}\frac{(\mathrm{div}_h\mathbf{v}_h,q_h)}{\|\mathbf{v}_h\|_{1,h}\|q_h\|_0}:=\gamma_h^2>\gamma_2$. 

~~~~\textbf{SC 2}: $\mathbf{Z}_h=\{\mathbf{v}_h\in\mathbf{V}_{h0}:\mathrm{div}_h\mathbf{v}_h=0\}$.

Evidently, a necessary condition that both of the two conditions hold is that the pressure space is the divergence space of the velocity space, in a conforming or nonconforming way.  

We have the convergence result for the finite element problem.
\begin{lemma}\label{lem:clm}\cite{Arnold.D1994,Brezzi.F;Fortin.M1991}
Let the stable condition \textbf{SC 1} hold. Then the discrete problem \eqref{eq:stokesfep} has a unique solution. Moreover, let $(\mathbf{u},p)$ and $(\mathbf{u}_h,p_h)$ be the solution of \eqref{eq:stokesvp} and \eqref{eq:stokesfep}, respectively, then there exists a constant $C$ depending only on $\gamma_1$ and $\gamma_2$, such that
\begin{multline*}
\|\mathbf{u}-\mathbf{u}_h\|_{1,h}+\|p-p_h\|_{0,\Omega}\leqslant C\Bigg(\inf_{\mathbf{v}_h\in\mathbf{V}_{h0}}\|\mathbf{u}-\mathbf{v}_h\|_{1,h}+\inf_{q_h\in \mathring{W}_h}\|p-q_h\|_0 \\ 
+\sup_{0\neq\mathbf{v}_h\in\mathbf{V}_h}\frac{(\nabla_h\mathbf{u},\nabla_h\mathbf{v}_h)+(\mathrm{div}_h\mathbf{v}_h,p)-(\mathbf{f},\mathbf{v}_h)}{\|\mathbf{v}\|_{1,h}}\Bigg).
\end{multline*}
\end{lemma}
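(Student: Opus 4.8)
The plan is to argue along the classical Strang--Brezzi lines for nonconforming saddle-point problems, the only twists being that the coercivity in \textbf{SC 1} is available \emph{only} on the discrete kernel $\mathbf{Z}_h$, while the best-approximation in the asserted bound is taken over all of $\mathbf{V}_{h0}$. Throughout write $a_h(\cdot,\cdot)=(\nabla_h\cdot,\nabla_h\cdot)$, $b_h(\cdot,\cdot)=(\mathrm{div}_h\cdot,\cdot)$, and abbreviate the consistency functional by $E_h(\mathbf{v}_h):=a_h(\mathbf{u},\mathbf{v}_h)+b_h(\mathbf{v}_h,p)-(\mathbf{f},\mathbf{v}_h)$, so that the last term of the claimed estimate is exactly $\sup_{\mathbf{v}_h\neq\mathbf{0}}|E_h(\mathbf{v}_h)|/\|\mathbf{v}_h\|_{1,h}$; note $b_h(\mathbf{u},q_h)=\int_\Omega(\mathrm{div}\,\mathbf{u})q_h=0$ for every $q_h\in\mathring{W}_h$ since $\mathbf{u}\in(H^1_0(\Omega))^2$ solves \eqref{eq:stokesvp}. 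For existence and uniqueness, \eqref{eq:stokesfep} is a square linear system, so it suffices to show $\mathbf{f}=\mathbf{0}$ forces $(\mathbf{u}_h,p_h)=(\mathbf{0},0)$: testing the first equation with $\mathbf{v}_h=\mathbf{u}_h$ and using the second gives $\mathbf{u}_h\in\mathbf{Z}_h$ and $a_h(\mathbf{u}_h,\mathbf{u}_h)=0$, whence $\mathbf{u}_h=\mathbf{0}$ by the coercivity in \textbf{SC 1}, and then $b_h(\mathbf{v}_h,p_h)=0$ for all $\mathbf{v}_h\in\mathbf{V}_{h0}$ forces $p_h=0$ by the inf-sup condition.

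The heart of the estimate is a Fortin-type correction moving $\mathbf{V}_{h0}$ into $\mathbf{Z}_h$. Given an arbitrary $\mathbf{w}_h\in\mathbf{V}_{h0}$, the inf-sup condition provides $\widetilde{\mathbf{v}}_h\in\mathbf{V}_{h0}$ with $b_h(\widetilde{\mathbf{v}}_h,q_h)=b_h(\mathbf{w}_h,q_h)$ for all $q_h\in\mathring{W}_h$ and
\[
\|\widetilde{\mathbf{v}}_h\|_{1,h}\leqslant\frac{1}{\gamma_2}\sup_{0\neq q_h\in\mathring{W}_h}\frac{b_h(\mathbf{w}_h-\mathbf{u},q_h)}{\|q_h\|_0}\leqslant\frac{C}{\gamma_2}\|\mathbf{u}-\mathbf{w}_h\|_{1,h},
\]
where $b_h(\mathbf{u},q_h)=0$ was used. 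Then $\mathbf{z}_h:=\mathbf{w}_h-\widetilde{\mathbf{v}}_h\in\mathbf{Z}_h$ and $\|\mathbf{u}-\mathbf{z}_h\|_{1,h}\leqslant C\|\mathbf{u}-\mathbf{w}_h\|_{1,h}$, so $\inf_{\mathbf{z}_h\in\mathbf{Z}_h}\|\mathbf{u}-\mathbf{z}_h\|_{1,h}\leqslant C\inf_{\mathbf{w}_h\in\mathbf{V}_{h0}}\|\mathbf{u}-\mathbf{w}_h\|_{1,h}$. I expect this step to be the real obstacle: it is the one place where the inf-sup hypothesis does genuine work, and one must be careful that the piecewise divergence $\mathrm{div}_h$ is used consistently both in the definition of $\mathbf{Z}_h$ and in the lifting.

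With such a $\mathbf{z}_h$ in hand, the velocity error follows from coercivity on $\mathbf{Z}_h$: fixing any $r_h\in\mathring{W}_h$ and using $b_h(\mathbf{u}_h-\mathbf{z}_h,p_h)=0=b_h(\mathbf{u}_h-\mathbf{z}_h,r_h)$ together with the first discrete equation and the definition of $E_h$, one obtains
\[
a_h(\mathbf{u}_h-\mathbf{z}_h,\mathbf{u}_h-\mathbf{z}_h)=a_h(\mathbf{u}-\mathbf{z}_h,\mathbf{u}_h-\mathbf{z}_h)+b_h(\mathbf{u}_h-\mathbf{z}_h,p-r_h)-E_h(\mathbf{u}_h-\mathbf{z}_h),
\]
so dividing by $\|\mathbf{u}_h-\mathbf{z}_h\|_{1,h}$ bounds $\|\mathbf{u}_h-\mathbf{z}_h\|_{1,h}$, hence $\|\mathbf{u}-\mathbf{u}_h\|_{1,h}$, by $\inf_{\mathbf{w}_h}\|\mathbf{u}-\mathbf{w}_h\|_{1,h}$, $\|p-r_h\|_0$ and the consistency term. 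For the pressure, apply the inf-sup condition to $p_h-r_h\in\mathring{W}_h$; using the first discrete equation and $(\mathbf{f},\mathbf{v}_h)=a_h(\mathbf{u},\mathbf{v}_h)+b_h(\mathbf{v}_h,p)-E_h(\mathbf{v}_h)$ yields $b_h(\mathbf{v}_h,p_h-r_h)=a_h(\mathbf{u}-\mathbf{u}_h,\mathbf{v}_h)+b_h(\mathbf{v}_h,p-r_h)-E_h(\mathbf{v}_h)$, so $\gamma_2\|p_h-r_h\|_0$ is controlled by $\|\mathbf{u}-\mathbf{u}_h\|_{1,h}$, $\|p-r_h\|_0$ and the consistency term. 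Taking the infimum over $r_h\in\mathring{W}_h$, combining the two estimates with the triangle inequality and inserting the velocity bound into the pressure bound, and observing that every constant produced depends only on $\gamma_1,\gamma_2$ (the $L^2$-boundedness of $\mathrm{div}_h$ contributing only an absolute factor), gives the stated inequality.
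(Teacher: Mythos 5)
Your argument is correct. The paper does not prove this lemma at all --- it is quoted from Arnold (1994) and Brezzi--Fortin (1991) --- and what you have written is precisely the classical Strang--Brezzi analysis contained in those references: uniqueness via coercivity on $\mathbf{Z}_h$ plus the inf-sup condition, the inf-sup--based lifting that replaces the best approximation over $\mathbf{Z}_h$ by one over all of $\mathbf{V}_{h0}$ (using $(\mathrm{div}_h\mathbf{u},q_h)=0$), coercivity on $\mathbf{Z}_h$ for the velocity error with the consistency functional $E_h$ absorbing the nonconformity, and the inf-sup condition again for the pressure.
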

The last term is the consistency error, which vanishes when $\mathbf{V}_{h0}\subset (H^1_0(\Omega))^2$.

%
%

\section{Stable finite element pair and discrete Stokes complex on quadrilateral grids}
\label{sec:stap}

\subsection{Quadrilateral triangulation}

\subsubsection{Geometry of convex quadrilateral grid}

Let $Q$ be a convex quadrilateral with $a_i$ the vertices and $e_i$ the edges, $i=1:4$. See Figure \ref{fig:convquad} for an illustration. Let $m_i$ be the mid-point of $e_i$, then the quadrilateral $\square m_1m_2m_3m_4$ is a parallelogram(\cite{Park.C;Sheen.D2003}). The cross point of $m_1m_3$ and $m_2m_4$, which is labelled as $O$, is the midpoint of both $m_1m_3$ and $m_2m_4$. Denote $\mathbf{r}=\overrightarrow{Om_3}$ and $\mathbf{s}=\overrightarrow{Om_4}$. Then the coordinates of the vertices in the coordinate system $\r O\s$ are $a_1(-1-\alpha,-1-\beta)$, $a_2(-1+\alpha,-1+\beta)$, $a_3(-1+\alpha,-1+\beta)$ and $a_4(1+\alpha,1+\beta)$ for some $\alpha,\beta$. 
Since $Q$ is convex, $|\alpha|+|\beta|<1$(\cite{Park.C;Sheen.D2013}). Without loss of generality, we assume $\alpha>0$, $\beta>0$ and $\r\times\s>0$.

Define the shape regularity indicator of the of the cell $Q$ by $\mathcal{R}_Q:=\max\{\frac{|\mathbf{r}||\mathbf{s}|}{\mathbf{r}\times\mathbf{s}},\frac{|\mathbf{r}|}{|\mathbf{s}|},\frac{|\mathbf{s}|}{|\mathbf{r}|}\}$. Evidently $\mathcal{R}_Q\geqslant 1$, and $\mathcal{R}_Q=1$ if and only if $Q$ is a square. A given family of quadrilateral triangulations $\{\mathcal{Q}_h\}$ of $\Omega$ is said to be regular, if all the shape regularity indicators of the cells of all the triangulations are uniformly bounded.

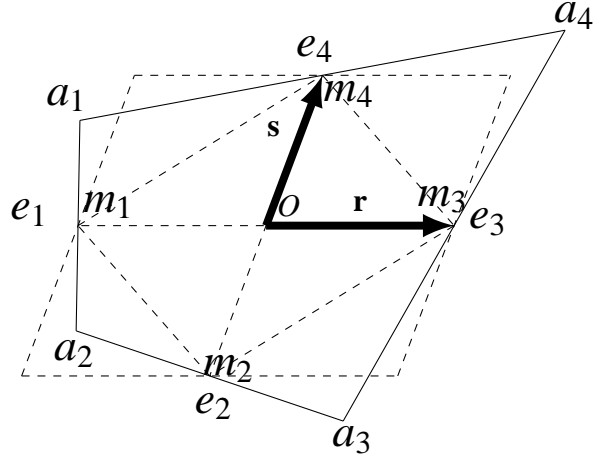
\begin{figure}[htbp]
\begin{tikzpicture}
\draw (0.05,0.26)node{$O$};

\draw[dashed](-2,2)--(3,2);
\draw[dashed](-3.5,-2)--(1.5,-2);
\draw[dashed](-3.5,-2)--(-2,2);
\draw[dashed](1.5,-2)--(3,2);

\draw[dashed](0.5,2)--(-1,-2);
\draw[dashed](2.25,0)--(-2.75,0);

\draw(-2.345,0.275)node{\Large$m_1$};
\draw(-0.75,-1.9)node{\Large$m_2$};
\draw(2.1,0.35)node{\Large$m_3$};
\draw(0.85,1.75)node{\Large$m_4$};

\draw(-3.4,0.175)node{\Large$e_1$};
\draw(-0.95,-2.4)node{\Large$e_2$};
\draw(2.7,0.05)node{\Large$e_3$};
\draw(0.4,2.35)node{\Large$e_4$};

\draw[->,line width=3pt,-latex](-0.25,0)-- node[auto] {$\r$} (2.25,0);
\draw[->,line width=3pt,-latex](-0.25,0)-- node[auto] {$\s$} (0.5,2);

\draw(3.725,2.6)--(-2.725,1.4);
\draw(3.725,2.6)--(0.775,-2.6);
\draw(-2.775,-1.4)--(0.775,-2.6);
\draw(-2.775,-1.4)-- (-2.725,1.4);

\draw(-2.9,1.7)node{\Large$a_1$};
\draw(-2.8,-1.7)node{\Large$a_2$};
\draw(0.9,-2.85)node{\Large$a_3$};
\draw(3.85,2.8)node{\Large$a_4$};

\draw[dashed](0.5,2)--(2.25,0);
\draw[dashed](2.25,0)--(-1,-2);
\draw[dashed](-1,-2)--(-2.75,0);
\draw[dashed](-2.75,0)--(0.5,2);

\end{tikzpicture}

\caption{Illustration of a convex quadrilateral $Q$.}\label{fig:convquad}
\end{figure}

Define two linear functions $\xi$ and $\eta$ by $\xi(a\r+b\s)=b\ \ \mbox{and}\ \ \eta(a\r+b\s)=a.$ The two functions play the same role on quadrilateral as that of barycentric coordinate on triangles. 

\subsubsection{Triangulations and grids}
Let $\mathcal{Q}_h$ be a regular triangulation of domain $\Omega$, with the cells being convex quadrilaterals; i.e., $\displaystyle\Omega=\cup_{Q\in\mathcal{Q}_h}Q$. Let $\mathcal{N}_h$ denote the set of all the vertices, $\mathcal{N}_h=\mathcal{N}_h^i\cup\mathcal{N}_h^b$, with $\mathcal{N}_h^i$ and $\mathcal{N}_h^b$ consisting of the interior vertices and the boundary vertices, respectively. Similarly, let $\mathcal{E}_h=\mathcal{E}_h^i\bigcup\mathcal{E}_h^b$ denote the set of all the edges, with $\mathcal{E}_h^i$ and $\mathcal{E}_h^b$ consisting of the interior edges and the boundary edges, respectively. For an edge $e$, $\mathbf{n}_e$ is a unit vector normal to $e$, and $\tau_e$ is a unit tangential vector of $e$ such that $\mathbf{n}_e\times\tau_e>0$. On the edge $e$, we use $\llbracket\cdot\rrbracket_e$ for the jump across $e$. 

Denote by $\mathfrak{F}$ the number of cells of the triangulation; denote by $\mathfrak{X}$, $\mathfrak{X}_I$, $\mathfrak{X}_B$ and $\mathfrak{X}_C$ the number of vertices, internal vertices, boundary vertices, and corner vertices, respectively; and denote by $\mathfrak{E}$, $\mathfrak{E}_I$ and $\mathfrak{E}_B$ the number of edges, internal edges, and boundary edges, respectively. Euler's formula states that $\mathfrak{F}+\mathfrak{X}=\mathfrak{E}+1$.

\subsection{An incomplete quadratic finite element on quadrilateral grid}
\label{sec:femecr}

\subsubsection{A finite element on convex quadrilaterals}
The quadrilateral finite element presented below coincides with the one given by Lin-Tobiska-Zhou\cite{Lin.Q;Tobiska.L;Zhou.A2005} on rectangle $Q$, and we call it the quadrilateral Lin-Tobiska-Zhou(QLTZ) element.
\\
{\centering
\fbox{
\begin{minipage}{0.95\textwidth}
The QLTZ element is defined by $(Q,P_Q^{\rm QLTZ},D_Q^{\rm QLTZ})$ with
\begin{enumerate}
\item $Q$ is a convex quadrilateral;
\item $P_Q^{\rm QLTZ}=P_1(Q)+span\{\xi^2,\eta^2\}$;
\item the components of $D_Q^{\rm QLTZ}=\{d_0^{\rm QLTZ}\}_{i=0:4}$ for any $v\in H^1(Q)$ are: 
$$
d_0^{\rm QLTZ}(v)=\fint_Qv\dx,\ \ \mbox{and}\ \ d_i^{\rm QLTZ}(v)=\fint_{e_i}v\dS,\ e_i\ \mbox{the\ edges\ of}\ T,\ i=1:4.
$$
\end{enumerate}
\end{minipage}
}}

The element defined above is unisolvent. Indeed, define
$$
\left\{
\begin{array}{l}
\displaystyle \phi_0=-3(3\xi^2+3\eta^2-2\alpha\xi-2\beta\eta-(4+\alpha^2+\beta^2))/(2\alpha^2+2\beta^2+6), \\
\displaystyle \phi_1=-\frac{3}{4}\xi^2+\frac{\beta-1}{2}\eta+\frac{3+\beta^2}{4}-\frac{\beta^2-\beta+3}{6}\phi_0, \\
\displaystyle \phi_2=-\frac{3}{4}\eta^2+\frac{\alpha-1}{2}\xi+\frac{3+\alpha^2}{4}-\frac{\alpha^2-\alpha+3}{6}\phi_0, \\
\displaystyle \phi_3=-\frac{3}{4}\xi^2+\frac{\beta+1}{2}\eta+\frac{3+\beta^2}{4}-\frac{\beta^2+\beta+3}{6}\phi_0, \\ 
\displaystyle \phi_4=-\frac{3}{4}\eta^2+\frac{\alpha+1}{2}\xi+\frac{3+\alpha^2}{4}-\frac{\alpha^2+\alpha+3}{6}\phi_0,
\end{array}
\right.
$$
then $d_i^{\rm QLTZ}(\phi_j)=\delta_{ij}$, $i,j=0:4$.

Define the interpolation $\Pi_Q^{\rm QLTZ}:H^1(Q)\to P_Q^{\rm QLTZ}$ by $\displaystyle\Pi_Q^{\rm QLTZ}w=\sum_{i=1}^4\fint_{e_i}w\dS\phi_i+\fint_Qw\dx\phi_0.$ Then $\Pi_Q^{\rm QLTZ}$ is well-defined, and $\Pi_Q^{\rm QLTZ}w=w$, if $w\in P_Q^{\rm QLTZ}$.

Let $\mathbf{w}=(w_1,w_2)^T\in (H^1(Q))^2$. We define the interpolator $\undertilde{\Pi}{}_Q^{\rm QLTZ}: (H^1(Q))^2\to (P_Q^{\rm QLTZ})^2$ by two steps: 
\begin{description}
\item[Step 1] Construct $\mathbf{w}^1=(w^1_1,w^1_2)^\top$ by $\displaystyle w^1_i=\sum_{j=1}^4\fint_{E_j}w_i\dS\phi_j$ for $i=1,2$.
\item[Step 2] Find $\mathbf{w}^2=(c_1\phi_0,c_2\phi_0)^\top$ such that
\begin{equation}\label{eq:divpres}
\int_Q\mathrm{div}\mathbf{w}^2q\dx=\int_Q\mathrm{div}(\mathbf{w}-\mathbf{w}^1)q\dx,\quad\forall\,q\in P_1(Q).
\end{equation}
Then define
$$
\undertilde{\Pi}{}_Q^{\rm QLTZ}\mathbf{w}:=\mathbf{w}^1+\mathbf{w}^2.
$$
\end{description}

\begin{lemma}
The interpolator $\undertilde{\Pi}{}_Q^{\rm QLTZ}$ is well-defined. Moreover, $\undertilde{\Pi}{}_Q^{\rm QLTZ}\mathbf{w}=\mathbf{w}$ if $\mathbf{w}\in (P_Q^{\rm QLTZ})^2$, and $\int_Q\mathrm{div}\undertilde{\Pi}{}_Q^{\rm QLTZ}\mathbf{w}q\dx=\int_Q\mathrm{div}\mathbf{w}q\dx$, $\forall\,q\in P_1(Q)$.
\end{lemma}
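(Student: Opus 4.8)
The plan is to prove the three assertions in turn; the substance is entirely in well-definedness. Step~1 is unambiguous, $\mathbf{w}^1$ being a fixed combination of the bounded edge functionals $\fint_{e_j}\cdot\dS$, so everything reduces to showing that \eqref{eq:divpres}, viewed as a system in the unknowns $c_1,c_2$, has exactly one solution. A priori this system is over-determined — one scalar equation per basis element of $P_1(Q)$, i.e.\ $3\times 2$ — so I would first choose a basis of $P_1(Q)$ that displays the hidden compatibility, namely $\{1,\partial_x\phi_0,\partial_y\phi_0\}$. That this is a basis follows from the shape of $\phi_0$: its quadratic part, written in the frame $\r O\s$, is a nonzero multiple of $\xi^2+\eta^2$, so the Hessian of $\phi_0$ is a nonzero scalar multiple of the identity; since $(x,y)\mapsto(\xi,\eta)$ is an invertible affine map, $\nabla\phi_0$ is an affine vector field with invertible linear part, whence $c_1\partial_x\phi_0+c_2\partial_y\phi_0$ has a nonzero constant gradient whenever $(c_1,c_2)\neq(0,0)$. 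This gives both the linear independence of $\partial_x\phi_0,\partial_y\phi_0$ and the fact that their span contains no nonzero constant, so $\{1,\partial_x\phi_0,\partial_y\phi_0\}$ is a basis of the three-dimensional space $P_1(Q)$, and \eqref{eq:divpres} is equivalent to its three instances $q\in\{1,\partial_x\phi_0,\partial_y\phi_0\}$.

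The crux is that the instance $q\equiv1$ holds with both sides equal to zero. On the left, the divergence theorem applied to $(\phi_0,0)$ and to $(0,\phi_0)$ gives $\int_Q\partial_{x_i}\phi_0\dx=\sum_{j=1}^4(\mathbf{n}_{e_j})_i\int_{e_j}\phi_0\dS$, and each $\int_{e_j}\phi_0\dS=|e_j|\,d_j^{\rm QLTZ}(\phi_0)=0$ by the identities $d_i^{\rm QLTZ}(\phi_k)=\delta_{ik}$; hence $\int_Q\mathrm{div}(c_1\phi_0,c_2\phi_0)\dx=0$ for all $c_1,c_2$. On the right, the same identities together with the edge averaging in Step~1 give $\fint_{e_j}w_i^1\dS=\fint_{e_j}w_i\dS$ on every edge, so $\int_{e_j}(\mathbf{w}-\mathbf{w}^1)\dS=\mathbf{0}$ and, again by the divergence theorem, $\int_Q\mathrm{div}(\mathbf{w}-\mathbf{w}^1)\dx=\sum_{j}\mathbf{n}_{e_j}\cdot\int_{e_j}(\mathbf{w}-\mathbf{w}^1)\dS=0$. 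Hence \eqref{eq:divpres} reduces to the $2\times2$ system
\begin{equation*}
\int_Q\mathrm{div}(c_1\phi_0,c_2\phi_0)\,\partial_{x_i}\phi_0\dx=\int_Q\mathrm{div}(\mathbf{w}-\mathbf{w}^1)\,\partial_{x_i}\phi_0\dx,\qquad i=1,2,
\end{equation*}
with coefficient matrix $\big(\int_Q\partial_{x_i}\phi_0\,\partial_{x_j}\phi_0\dx\big)_{i,j=1,2}$, the Gram matrix of the linearly independent pair $\partial_x\phi_0,\partial_y\phi_0$ in $L^2(Q)$ and hence symmetric positive definite. So $(c_1,c_2)$ exists and is unique, and $\undertilde{\Pi}{}_Q^{\rm QLTZ}$ is well defined.

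The remaining assertions follow quickly. If $\mathbf{w}\in(P_Q^{\rm QLTZ})^2$, then $\Pi_Q^{\rm QLTZ}w_i=w_i$, so Step~1 yields $w_i^1=w_i-(\fint_Q w_i\dx)\phi_0$; thus $\mathbf{w}-\mathbf{w}^1=(c_1\phi_0,c_2\phi_0)$ with $c_i=\fint_Q w_i\dx$, which trivially solves \eqref{eq:divpres}, so by the uniqueness just proved $\mathbf{w}^2=\mathbf{w}-\mathbf{w}^1$ and $\undertilde{\Pi}{}_Q^{\rm QLTZ}\mathbf{w}=\mathbf{w}^1+\mathbf{w}^2=\mathbf{w}$. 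And adding $\int_Q\mathrm{div}\mathbf{w}^1\,q\dx$ to both sides of \eqref{eq:divpres} gives $\int_Q\mathrm{div}\undertilde{\Pi}{}_Q^{\rm QLTZ}\mathbf{w}\,q\dx=\int_Q\mathrm{div}\mathbf{w}\,q\dx$ for every $q\in P_1(Q)$.

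The one real difficulty is the apparent over-determinedness of \eqref{eq:divpres}; it is dissolved by the compatibility at $q\equiv1$, which in turn rests on two independent facts — that $\phi_0$ has zero average on each edge of $Q$, and that the Step~1 interpolation reproduces edge averages — so that both sides of \eqref{eq:divpres} annihilate the constants and the effective system is the $2\times2$ symmetric positive definite one displayed above.
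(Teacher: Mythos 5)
Your proof is correct, and the reduction to a $2\times2$ system via the compatibility at $q\equiv1$ (both sides vanish because $\phi_0$ has zero edge averages and Step~1 reproduces edge averages) is exactly the paper's first move. Where you genuinely diverge is in how the $2\times2$ system is shown to be nonsingular. The paper tests \eqref{eq:divpres} against $\xi$ and $\eta$, assembles the coefficient matrix entry by entry from Tables \ref{tab:tecasis} and \ref{tab:da}, and computes its determinant explicitly as
$\frac{-48\alpha^4-48\beta^4+96\alpha^2\beta^2+96\alpha^2+96\beta^2+144}{(\alpha^2+\beta^2+3)^2}\,\r\times\s$,
verified positive from $|\alpha|+|\beta|<1$. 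You instead test against $\partial_x\phi_0$ and $\partial_y\phi_0$, having first checked that $\{1,\partial_x\phi_0,\partial_y\phi_0\}$ is a basis of $P_1(Q)$ — which rests on the structural observation that the quadratic part of $\phi_0$ is a nonzero multiple of $\xi^2+\eta^2$, so $\nabla^2\phi_0$ is a nonzero multiple of $J^\top J$ for the invertible linear map $J:(x,y)\mapsto(\xi,\eta)$ and hence invertible. The coefficient matrix then becomes the Gram matrix of $\{\partial_x\phi_0,\partial_y\phi_0\}$ in $L^2(Q)$, which is symmetric positive definite with no computation at all. Your route is cleaner and makes the mechanism transparent (the bubble's own gradient components furnish the right test functions); the paper's route is heavier but yields an explicit formula for the determinant in terms of $\alpha,\beta$ and $\r\times\s$, which quantifies how the conditioning of \eqref{eq:divpres} degenerates as the cell distorts. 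The paper dispatches the last two assertions with ``follows from the definition''; your explicit verification — identifying $\mathbf{w}-\mathbf{w}^1=(c_1\phi_0,c_2\phi_0)$ with $c_i=\fint_Qw_i\dx$ when $\mathbf{w}\in(P_Q^{\rm QLTZ})^2$ and invoking uniqueness — is a correct filling-in of that step.
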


\begin{proof}
To show the well-definedness of $\undertilde{\Pi}{}_Q^{\rm QLTZ}$, we only have to show that the problem \eqref{eq:divpres} is well-posed. By the definition of $\mathbf{w}^1$, we obtain that $\fint_{e_i}\mathbf{w}-\mathbf{w}^1\dS=\mathbf{0}$ for $i=1:4$. Thus by the property of $\phi_0$, $\int_Q\mathrm{div}(c_1\phi_0,c_2\phi_0)^\top\dx=0=\int_Q\mathrm{div}(\mathbf{w}-\mathbf{w}^1)\dx$ for any $c_1,c_2$. Therefore, we only have to show that the equation \eqref{eq:divpres} admits a unique solution pair $(c_1,c_2)$ for $q$ substituted by $\xi$ and $\eta$. The coefficient matrix of the left hand side of \eqref{eq:divpres} is then
$$
\displaystyle
\left[
\begin{array}{cc}
\displaystyle\int_Q\partial_x\phi_0\xi\dx & \displaystyle\int_Q\partial_y\phi_0 \xi\dx\\
\displaystyle\int_Q\partial_x\phi_0\eta\dx & \displaystyle\int_Q\partial_y\phi_0\eta\dx
\end{array}
\right] = 
\left[
\begin{array}{cc}
\displaystyle\int_Q\partial_\r\phi_0\xi\dx & \displaystyle\int_Q\partial_\s\phi_0 \xi\dx\\
\displaystyle\int_Q\partial_\r\phi_0\eta\dx & \displaystyle\int_Q\partial_\s\phi_0\eta\dx
\end{array}
\right]
\left[
\begin{array}{cc}
\r_x & \s_x \\
\r_y & \s_y
\end{array}
\right]^{-1},
$$
and we only have to check the determinant of the coefficient matrix.

Technically, we construct two tables(Tables \ref{tab:tecasis} and \ref{tab:da}) about the evaluation of some functions firstly. In particular, Table \ref{tab:tecasis} is used in generating Table \ref{tab:da}. For example, we calculate $\int_Q\xi\eta\dx=\int_Q\frac{1}{2}\partial_\s(\xi^2\eta)\dx=\frac{1}{2}\int_{\partial Q}(\xi^2\eta)\s\cdot\mathbf{n}\dS=\frac{1}{2}\sum_{i=1}^4\int_{e_i}(\xi^2\eta)\s\times\tau_i\dS=\frac{4}{3}\alpha\beta\r\times\s$. Here, we have noted that $\partial_\r\xi=\partial_\s\eta=0$ and $\partial_\s\xi=\partial_\r\eta=1$.

\begin{table}[htbp]
\begin{tabular}{c|cccccccc}
 function($u$) & 1 & $\xi$ & $\eta$ & $\xi^2$ & $\eta^2$ & $\xi^3$ & $\eta^3$ & $\xi^2\eta$ \\
\hline $\displaystyle\fint_{e_1}u\dS$ & 1 & 0 & -1 & $\displaystyle \frac{(1-\beta)^2}{3}$ & $\displaystyle 1+\frac{\alpha^2}{3}$&$\displaystyle 0$ & $\displaystyle -1-\alpha^2$ & $\displaystyle -\frac{(1-\beta)^2}{3}$ \\
\hline $\displaystyle\fint_{e_2}u\dS$ & 1 & -1 & 0 & $\displaystyle 1+\frac{\beta^2}{3}$ & $\displaystyle \frac{(1-\alpha)^2}{3}$ & $\displaystyle -1-\beta^2$ & $\displaystyle 0$ & $\displaystyle \frac{2(1-\alpha)\beta}{3}$ \\
\hline $\displaystyle\fint_{e_3}u\dS$ & 1 & 0 & 1 & $\displaystyle \frac{(1+\beta)^2}{3}$ & $\displaystyle 1+\frac{\alpha^2}{3}$ & $\displaystyle 0$ & $\displaystyle 1+\alpha^2$ & $\displaystyle \frac{(1+\beta)^2}{3}$ \\
\hline $\displaystyle\fint_{e_4}u\dS$ & 1 & 1 & 0 & $\displaystyle 1+\frac{\beta^2}{3}$ & $\displaystyle \frac{(1+\alpha)^2}{3}$ & $\displaystyle 1+\beta^2$ & $\displaystyle 0$ & $\displaystyle \frac{2(1+\alpha)\beta}{3}$ \\
\hline
\end{tabular}
\caption{Boundary average of some functions.}\label{tab:tecasis}
\end{table}

\begin{table}[htbp]
\begin{tabular}{c|cccccc} 
function ($u$) & $1$ & $\xi$ & $\eta$ & $\xi^2$ & $\eta^2$ & $\xi\eta$ \\
\hline $\int_Q u\dx$ & $4\r\times\s$ & $\frac{4\alpha}{3}\r\times\s$ & $\frac{4\beta}{3}\r\times\s$ & $\frac{4}{3}(1+\beta^2)\r\times\s$ & $\frac{4}{3}(1+\alpha^2)\r\times\s$ & $\frac{4}{3}\alpha\beta\r\times\s$ \\ 
\hline
\end{tabular}
\caption{Domain average of some functions}\label{tab:da}
\end{table}

As $\phi_0=-3(3\xi^2+3\eta^2-2\alpha\xi-2\beta\eta-(4+\alpha^2+\beta^2))/(2\alpha^2+2\beta^2+6)$, we have
$$
\partial_\r\phi_0=\frac{-9\eta+3\beta}{\alpha^2+\beta^2+3},\quad \ \mbox{and}\quad \partial_\s\phi_0=\frac{-9\xi+3\alpha}{\alpha^2+\beta^2+3}.
$$
Then
$$
\begin{array}{l}
\displaystyle \int_Q\partial_\r\phi_0\xi=\frac{-1}{\alpha^2+\beta^2+3}\int_Q(9\xi\eta-3\beta\xi)\dx=\frac{-8\alpha\beta}{\alpha^2+\beta^2+3}\r\times\s,
\\
\displaystyle \int_Q\partial_\s\phi_0\xi\dx=\frac{-1}{\alpha^2+\beta^2+3}\int_Q(9\xi^2-3\alpha\xi)\dx=\frac{4\alpha^2-12\beta^2-12}{\alpha^2+\beta^2+3}\r\times\s,
\\
\displaystyle \int_Q\partial_\r\phi_0\eta\dx=\frac{-1}{\alpha^2+\beta^2+3}\int_Q(9\eta^2-3\beta\eta)\dx=\frac{-12\alpha^2+4\beta^2-12}{\alpha^2+\beta^2+3}\r\times\s,
\\
\displaystyle \int_Q\partial_\s\phi_0\eta\dx=\frac{-1}{\alpha^2+\beta^2+3}\int_Q(9\xi\eta-3\alpha\eta)\dx=\frac{-8\alpha\beta}{\alpha^2+\beta^2+3}\r\times\s,
\end{array}
$$
and

$$
\mathrm{det}\left[
\begin{array}{cc}
\displaystyle\int_Q\partial_\r\phi_0\xi\dx & \displaystyle\int_Q\partial_\s\phi_0 \xi\dx\\
\displaystyle\int_Q\partial_\r\phi_0\eta\dx & \displaystyle\int_Q\partial_\s\phi_0\eta\dx
\end{array}
\right]=\frac{-48\alpha^4-48\beta^4+96\alpha^2\beta^2+96\alpha^2+96\beta^2+144}{(\alpha^2+\beta^2+3)^2}(\r\times\s)^2.
$$
Therefore, since $0<\alpha,\beta<1$,
\begin{multline*}
\mathrm{det}\left[
\begin{array}{cc}
\displaystyle\int_Q\partial_x\phi_0\xi \dx& \displaystyle\int_Q\partial_y\phi_0 \xi\dx\\
\displaystyle\int_Q\partial_x\phi_0\eta\dx & \displaystyle\int_Q\partial_y\phi_0\eta\dx
\end{array}
\right] =
det\left[
\begin{array}{cc}
\displaystyle\int_Q\partial_\r\phi_0\xi\dx & \displaystyle\int_Q\partial_\s\phi_0 \xi\dx\\
\displaystyle\int_Q\partial_\r\phi_0\eta\dx & \displaystyle\int_Q\partial_\s\phi_0\eta\dx
\end{array}
\right]
det\left[
\begin{array}{cc}
\r_x & \s_x \\
\r_y & \s_y
\end{array}
\right]^{-1} \\ 
=\frac{-48\alpha^4-48\beta^4+96\alpha^2\beta^2+96\alpha^2+96\beta^2+144}{(\alpha^2+\beta^2+3)^2}\r\times\s>0.
\end{multline*}
This proves the well-posedness of \eqref{eq:divpres}, and thus the well-definition of $\undertilde{\Pi}{}_Q^{\rm QLTZ}$.

The remaining follows from the definition of the interpolation. This finishes the proof.
\end{proof}

\subsubsection{A finite element space for $H^1(\Omega)$}

Associated with $H^1(\Omega)$, define a finite element space $V_h^{\rm QLTZ}$ by
$$
V_h^{\rm QLTZ}:=\{w\in L^2(\Omega):w|_Q\in P_Q^{\rm QLTZ}, \fint_e w\dS\ \mbox{is\ continuous\ at}\ e\in\mathcal{E}_h^i\},
$$
and associated with $H^1_0(\Omega)$, define a finite element space $V_{h0}$ by
$$
V_{h0}^{\rm QLTZ}:=\{w_h\in V_h^{\rm QLTZ}:\fint_ew_h\dS=0\ \mbox{at\ }e\in \mathcal{E}_h^b\}.
$$

We define the interpolation operator $\Pi_h^{\rm QLTZ}:H^1(\Omega)\to V_h^{\rm QLTZ}$ by 
$$
\Pi_h^{\rm QLTZ}w\in V_h^{\rm QLTZ},\ \ (\Pi_h^{\rm QLTZ}w)|_Q=\Pi_Q^{\rm QLTZ}(w|_Q), \ \mbox{for}\,w\in H^1(\Omega).
$$
The well-definedness of $\Pi_h^{\rm QLTZ}$ is evident. Moreover, $\Pi_h^{\rm QLTZ}w\in V_{h0}^{\rm QLTZ}$, if $w\in H^1_0(\Omega)$.

Associated with $(H^1(\Omega))^2$ (and $(H^1_0(\Omega))^2$), we define the finite element space $\mathbf{V}_h^{\rm QLTZ}:=(V_h^{\rm QLTZ})^2$ (and $\mathbf{V}_{h0}^{\rm QLTZ}:=(V_{h0}^{\rm QLTZ})^2$, respectively). Define the interpolation operator $\undertilde{\Pi}{}_h^{\rm QLTZ}:(H^1(\Omega))^2\to\mathbf{V}_h^{\rm QLTZ}$ by
$$
\undertilde{\Pi}{}_h^{\rm QLTZ}\mathbf{w}\in \mathbf{V}_h^{\rm QLTZ},\ (\undertilde{\Pi}{}_h^{\rm QLTZ}\mathbf{w})|_Q=\undertilde{\Pi}{}_Q^{\rm QLTZ}(\mathbf{w}|_Q),\ \mbox{for}\,\mathbf{w}\in (H^1(\Omega))^2.
$$
Again, $\undertilde{\Pi}{}_h^{\rm QLTZ}$ is well-defined, and $\undertilde{\Pi}{}_h^{\rm QLTZ}\mathbf{w}\in\mathbf{V}_{h0}^{\rm QLTZ}$, if $\mathbf{w}\in (H^1_0(\Omega))^2$. 

Evidently, $\Pi_h^{\rm QLTZ}w_h=w_h$ for $w_h\in V_h^{\rm QLTZ}$ and $\undertilde{\Pi}{}_h^{\rm QLTZ}\mathbf{w}_h=\mathbf{w}_h$ for $\mathbf{w}_h\in \mathbf{V}_h^{\rm QLTZ}$. Thus, since $P_1(Q)\subset P_Q^{\rm QLTZ}$, by standard technique\cite{Lin.Q;Tobiska.L;Zhou.A2005,Shi.Z;Wang.M2013mono,Wang.M;Shi.Z;Xu.J2007,Park.C;Sheen.D2013}, we have the lemma below.
{
\begin{lemma}\label{lem:interr}
Let $\{\mathcal{Q}_h\}$ be a regular family of convex quadrilateral triangulations of $\Omega$. 
\begin{enumerate}
\item There exists a constant $C$, such that it holds for $w\in H^s(\Omega)$, $s=1,2$, that 
$$
|w-\Pi_h^{\rm QLTZ}w|_{m,h}\leqslant Ch^{s-m}|w|_{s,\Omega},\ 0\leqslant m\leqslant s.
$$
\item There exists a constant $C$, such that it holds for $\mathbf{w}\in (H^s(\Omega))^2$, $s=1,2$, that 
$$
|\mathbf{w}-\undertilde{\Pi}{}_h^{\rm QLTZ}\mathbf{w}|_{m,h}\leqslant Ch^{s-m}|\mathbf{w}|_{s,\Omega},\ 0\leqslant m\leqslant s.
$$
\end{enumerate}
\end{lemma}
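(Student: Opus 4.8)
The plan is to deduce both items from cellwise estimates. By definition $|w-\Pi_h^{\rm QLTZ}w|_{m,h}^2=\sum_{Q\in\mathcal{Q}_h}|w-\Pi_Q^{\rm QLTZ}w|_{m,Q}^2$, so it suffices to show, with $h_Q=\mathrm{diam}\,Q$,
\begin{equation}\label{eq:loc}
|w-\Pi_Q^{\rm QLTZ}w|_{m,Q}\leqslant Ch_Q^{s-m}|w|_{s,Q},\qquad 0\leqslant m\leqslant s,\ s=1,2,
\end{equation}
with $C$ depending only on an upper bound for the shape regularity indicator $\mathcal{R}_Q$; summing over cells and using $h_Q\leqslant h$ then gives item (1). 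Because $\xi$ and $\eta$ are affine in the physical coordinates, the dilation $x\mapsto h_Q^{-1}x$ carries $Q$ to a cell $\widehat Q$ of unit diameter, $P_Q^{\rm QLTZ}$ onto $P_{\widehat Q}^{\rm QLTZ}$, the edge averages $d_i^{\rm QLTZ}$ onto the edge averages on $\widehat Q$, and $d_0^{\rm QLTZ}$ onto the cell average on $\widehat Q$; hence $\Pi_{\widehat Q}^{\rm QLTZ}\widehat w=\widehat{\Pi_Q^{\rm QLTZ}w}$, and by the standard scaling of Sobolev seminorms \eqref{eq:loc} reduces to the $h$-free estimate $|\widehat w-\Pi_{\widehat Q}^{\rm QLTZ}\widehat w|_{m,\widehat Q}\leqslant C|\widehat w|_{s,\widehat Q}$, to be proved uniformly over the family $\mathcal{F}$ of normalised cells whose shape regularity indicator is bounded by the fixed constant.

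On a cell $\widehat Q\in\mathcal{F}$ the estimate rests on three facts. First, a bound on $\mathcal{R}_Q$ confines the shape parameters $(\alpha,\beta)$ to a compact subset of the open triangle $\{|\alpha|+|\beta|<1\}$ and keeps $|\r|/|\s|$ and $|\r||\s|/(\r\times\s)$ bounded, so that $\mathcal{F}$ is a compact family of convex quadrilaterals staying uniformly away from degeneracy. Second, consequently the degrees of freedom $d_i^{\rm QLTZ}$ are bounded on $H^1(\widehat Q)$ with a norm uniform over $\mathcal{F}$ (trace theorem), and the explicit nodal basis functions $\phi_0,\dots,\phi_4$, being polynomials whose coefficients depend continuously on $(\alpha,\beta)$ and on the frame $\r O\s$, have $H^m$-norms on $\widehat Q$ bounded uniformly over $\mathcal{F}$; hence $\Pi_{\widehat Q}^{\rm QLTZ}$ is bounded on $H^1(\widehat Q)$ uniformly over $\mathcal{F}$. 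Third, $P_1(\widehat Q)\subset P_{\widehat Q}^{\rm QLTZ}$, so $\Pi_{\widehat Q}^{\rm QLTZ}q=q$ for every $q\in P_1(\widehat Q)$. Writing $\widehat w-\Pi_{\widehat Q}^{\rm QLTZ}\widehat w=(I-\Pi_{\widehat Q}^{\rm QLTZ})(\widehat w-q)$ for arbitrary $q\in P_1(\widehat Q)$ and invoking the Bramble--Hilbert lemma --- whose constant also depends continuously on the domain and is therefore bounded over the precompact family $\mathcal{F}$ --- yields the required bound and hence item (1).

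For item (2) the argument is run componentwise with one scalar correction. Step 1 of the construction applies to each component the operator $w\mapsto\sum_{j=1}^4\fint_{e_j}w\dS\,\phi_j$, which is again bounded on $H^1(\widehat Q)$ uniformly over $\mathcal{F}$; Step 2 adds $(c_1\phi_0,c_2\phi_0)^\top$, where $(c_1,c_2)$ solves \eqref{eq:divpres}. The determinant of the coefficient matrix of \eqref{eq:divpres} was computed in the proof of the preceding lemma to equal a strictly positive multiple of $(\r\times\s)^2$, the multiplier staying bounded above and below for $0<\alpha,\beta<1$; thus on $\widehat Q\in\mathcal{F}$ that matrix is uniformly invertible and $|c_1|+|c_2|\leqslant C\|\mathrm{div}(\widehat{\mathbf{w}}-\widehat{\mathbf{w}}^{1})\|_{0,\widehat Q}\leqslant C|\widehat{\mathbf{w}}|_{1,\widehat Q}$, so $\undertilde{\Pi}{}_{\widehat Q}^{\rm QLTZ}$ is bounded on $(H^1(\widehat Q))^2$ uniformly over $\mathcal{F}$. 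Since $\undertilde{\Pi}{}_Q^{\rm QLTZ}$ reproduces $(P_1(Q))^2\subset(P_Q^{\rm QLTZ})^2$ by the preceding lemma, the same Bramble--Hilbert step as above applies verbatim and gives item (2).

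The step I expect to cost the most care is the uniformity asserted in the second paragraph: one must verify that a uniform bound on $\mathcal{R}_Q$ genuinely forces the cell shapes into a compact set on which the explicit formulas for $\phi_0,\dots,\phi_4$ and for the Step 2 matrix have coefficients bounded away from their singular values, so that the trace constants, the interpolation-operator norms, and the Bramble--Hilbert constant may be taken independent of the cell. Once this compactness is in hand, the dilation-plus-Bramble--Hilbert machinery is entirely routine and parallels the treatments in \cite{Lin.Q;Tobiska.L;Zhou.A2005,Wang.M;Shi.Z;Xu.J2007,Park.C;Sheen.D2013}.
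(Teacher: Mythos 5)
Your proposal is correct and is precisely the ``standard technique'' the paper invokes without writing out: cellwise dilation to unit-diameter cells, uniform boundedness of the degrees of freedom and of the explicit nodal basis over the admissible shapes, reproduction of $P_1(Q)$ (resp.\ $(P_1(Q))^2$, using the uniform invertibility of the Step-2 matrix), and the Bramble--Hilbert lemma. One small inaccuracy: a bound on $\mathcal{R}_Q$ involves only $\r$ and $\s$ and does \emph{not} confine $(\alpha,\beta)$ to a compact subset of the open triangle $\{|\alpha|+|\beta|<1\}$; fortunately this is not needed, since the basis-function coefficients (denominator $2\alpha^2+2\beta^2+6\geqslant 6$) and the Step-2 determinant, whose numerator equals $-48(\alpha^2-\beta^2)^2+96(\alpha^2+\beta^2)+144\geqslant 96$, stay uniformly nondegenerate on the whole closed triangle, while the $\mathcal{R}_Q$ bound is what guarantees $|Q|\gtrsim(\mathrm{diam}\,Q)^2$ and hence uniform chunkiness for the trace and Bramble--Hilbert constants.
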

}

\subsubsection{Application to second order elliptic problem}
We consider the variational problem: find $u\in H^1_0(\Omega)$, such that
\begin{equation}\label{eq:poissonvp}
(\nabla u,\nabla v)=(f,v),\ \forall\,v\in H^1_0(\Omega).
\end{equation}
Then $V_h^{\rm QLTZ}$ is a consistent finite element space. The finite element problem is to find $u_h\in V_{h0}^{\rm QLTZ}$, such that
\begin{equation}\label{eq:poissonfep}
(\nabla_hu_h,\nabla_hv_h)=(f,v_h),\ \forall\,v_h\in V_{h0}^{\rm QLTZ}.
\end{equation}
Since the edge average of $w_h\in V_{h0}^{\rm QLTZ}$ is continuous across internal edges, by the standard technique, we have the error estimate below.
\begin{theorem}
Let the assumptions of Lemma \ref{lem:interr} hold. Let $u$ and $u_h$ be the solutions of \eqref{eq:poissonvp} and \eqref{eq:poissonfep}, respectively. 
\begin{enumerate}
\item If $u\in H^2(\Omega)\cap H^1_0(\Omega)$, then $\|u-u_h\|_{1,h}\lesssim h|u|_{2,\Omega}$.
\item If $\Omega$ is convex and $f\in L^2(\Omega)$, then $\|u-u_h\|_{0,\Omega}\lesssim h^2\|f\|_{0,\Omega}$.
\end{enumerate}
\end{theorem}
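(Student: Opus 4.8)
The plan is to run the classical nonconforming error analysis: Strang's second lemma for part (1) and an Aubin--Nitsche duality argument for part (2). The only element-specific inputs are that functions in $V_{h0}^{\rm QLTZ}$ have edge averages continuous across interior edges and vanishing on boundary edges, that $\Pi_h^{\rm QLTZ}$ reproduces $P_1$ cellwise, and that $\Pi_Q^{\rm QLTZ}$ preserves edge averages ($\fint_e\Pi_Q^{\rm QLTZ}w=\fint_ew$ on each edge $e$ of $Q$).

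For part (1), I would first note that the continuity/vanishing of edge averages yields a broken Poincar\'e--Friedrichs inequality $\|v_h\|_{1,h}\lesssim|v_h|_{1,h}$ on $V_{h0}^{\rm QLTZ}$, so \eqref{eq:poissonfep} is well posed and Strang's second lemma gives
\[
\|u-u_h\|_{1,h}\lesssim\inf_{v_h\in V_{h0}^{\rm QLTZ}}\|u-v_h\|_{1,h}+\sup_{0\neq v_h\in V_{h0}^{\rm QLTZ}}\frac{(\nabla_hu,\nabla_hv_h)-(f,v_h)}{\|v_h\|_{1,h}}.
\]
The infimum is $\lesssim h|u|_{2,\Omega}$ by Lemma~\ref{lem:interr}. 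For the consistency term, integrating by parts cellwise and using $-\Delta u=f$ turns the numerator into $\sum_{e\in\mathcal{E}_h}\int_e\frac{\partial u}{\partial\mathbf{n}_e}\llbracket v_h\rrbracket_e\dS$ (with $\llbracket v_h\rrbracket_e=v_h$ on $\mathcal{E}_h^b$); since $\int_e\llbracket v_h\rrbracket_e\dS=0$ on every edge I would subtract the edge-average constant $c_e$ of $\partial u/\partial\mathbf{n}_e$, and then apply a scaled trace inequality together with the Bramble--Hilbert lemma on each edge patch $\omega_e$, with constants controlled by $\sup_Q\mathcal{R}_Q$, to bound $\|\partial u/\partial\mathbf{n}_e-c_e\|_{0,e}\lesssim h^{1/2}|u|_{2,\omega_e}$ and $\|\llbracket v_h\rrbracket_e\|_{0,e}\lesssim h^{1/2}|v_h|_{1,h,\omega_e}$; summing with Cauchy--Schwarz gives $\lesssim h|u|_{2,\Omega}\|v_h\|_{1,h}$, and (1) follows.

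For part (2), set $e_h=u-u_h$ and introduce the dual solution $w\in H^1_0(\Omega)$ of $(\nabla w,\nabla v)=(e_h,v)$ for all $v\in H^1_0(\Omega)$, which satisfies $\|w\|_{2,\Omega}\lesssim\|e_h\|_{0,\Omega}$ since $\Omega$ is convex. Expanding $\|e_h\|_{0,\Omega}^2=(e_h,-\Delta w)$ cellwise, splitting $\nabla w=\nabla(w-\Pi_h^{\rm QLTZ}w)+\nabla_h\Pi_h^{\rm QLTZ}w$, and using \eqref{eq:poissonfep} (note $\Pi_h^{\rm QLTZ}w\in V_{h0}^{\rm QLTZ}$), one reaches
\[
\|e_h\|_{0,\Omega}^2=(\nabla_he_h,\nabla(w-\Pi_h^{\rm QLTZ}w))+\big[(\nabla_hu,\nabla_h\Pi_h^{\rm QLTZ}w)-(f,\Pi_h^{\rm QLTZ}w)\big]-\sum_Q\int_{\partial Q}\frac{\partial w}{\partial\mathbf{n}}e_h\dS.
\]
The first term is $\lesssim|e_h|_{1,h}\,h|w|_{2,\Omega}\lesssim h^2\|f\|_{0,\Omega}\|e_h\|_{0,\Omega}$ by part (1), Lemma~\ref{lem:interr}, and the elliptic regularity $|u|_{2,\Omega}\lesssim\|f\|_{0,\Omega}$. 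The bracketed term is the consistency functional of part (1) evaluated at $\Pi_h^{\rm QLTZ}w$; subtracting edge averages and using $\fint_e\Pi_h^{\rm QLTZ}w=\fint_ew$ on both adjacent cells gives $\|\llbracket\Pi_h^{\rm QLTZ}w-w\rrbracket_e\|_{0,e}\lesssim h^{3/2}|w|_{2,\omega_e}$, so that term is $\lesssim h^2|u|_{2,\Omega}|w|_{2,\Omega}\lesssim h^2\|f\|_{0,\Omega}\|e_h\|_{0,\Omega}$. For the last term, since $w\in H^2$ has single-valued normal flux on edges and $u\in H^1_0$, $\sum_Q\int_{\partial Q}\frac{\partial w}{\partial\mathbf{n}}u\dS=0$, so it equals $\sum_e\int_e(\frac{\partial w}{\partial\mathbf{n}_e}-c_e')\llbracket u_h\rrbracket_e\dS$ after subtracting edge averages $c_e'$; writing $\llbracket u_h\rrbracket_e=-\llbracket e_h\rrbracket_e$ and splitting $e_h=(u-\Pi_h^{\rm QLTZ}u)+(\Pi_h^{\rm QLTZ}u-u_h)$, I would estimate the two pieces by Lemma~\ref{lem:interr} and by part (1) respectively, again via the scaled-trace/Bramble--Hilbert bounds, to get $\lesssim h|w|_{2,\Omega}\cdot h|u|_{2,\Omega}\lesssim h^2\|f\|_{0,\Omega}\|e_h\|_{0,\Omega}$. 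Dividing through by $\|e_h\|_{0,\Omega}$ yields (2).

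The main obstacle is carrying out the consistency-error estimates uniformly on general convex quadrilaterals: because the cells are not affine images of a single reference square, the scaled trace inequalities, the local Bramble--Hilbert bounds, and the edge-patch summations must all be arranged so that every constant depends only on $\sup_Q\mathcal{R}_Q$. This is exactly the "standard technique" in the cited works on QLTZ-type and quadrilateral Morley elements, and once it is in place the remaining steps are routine bookkeeping.
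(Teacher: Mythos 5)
Your proposal is correct and is precisely the ``standard technique'' the paper invokes without writing out: the second Strang lemma plus an edge-average subtraction for the consistency term (enabled by the continuity/vanishing of edge averages of $V_{h0}^{\rm QLTZ}$ functions and the approximation property of Lemma \ref{lem:interr}), followed by the usual Aubin--Nitsche duality argument for the $L^2$ bound. No discrepancy with the paper's (omitted) argument; the only care needed is, as you note, keeping all trace/Bramble--Hilbert constants controlled by the shape-regularity indicator $\mathcal{R}_Q$ on general convex quadrilaterals.
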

From this point onwards, $\lesssim$, $\gtrsim$, and $\cequiv$ respectively denote $\leqslant$, $\geqslant$, and $=$ up to a constant. The hidden constants depend on the domain. And, when triangulation is involved, they also depend on the shape-regularity of the triangulation, but they do not depend on $h$ or any other mesh parameter.

\subsection{A stable finite element pair for Stokes problem}
For Stokes problem, $\mathbf{V}_h^{\rm QLTZ}$ provides a consistent finite element space for the velocity field; we a in lack of a space $\mathring{W}_h^\q$ for the pressure. To satisfy \textbf{SC 1} and \textbf{SC 2} at the same time, we need $\mathrm{div}_h\mathbf{V}_{h0}^{\rm QLTZ}= \mathring{W}_h^\q$. Here we use the space of piecewise linear polynomials for the pressure field. Define $W_h^\q:=\{q_h\in L^2(\Omega):q_h|_Q\in P_1(Q),\ \forall\,Q\in\mathcal{Q}_h\}$ and $\mathring{W}_h^\q=W_h^\q\cap L^2_0(\Omega)$. The finite element problem is to find $(\mathbf{u}_h,p_h)\in\mathbf{V}_{h0}^{\rm QLTZ}\times \mathring{W}_h^\q$, such that
\begin{equation}\label{eq:stokesfep}
\left\{
\begin{array}{rll}
(\nabla_h\mathbf{u}_h,\nabla_h\mathbf{v}_h)+(\mathrm{div}_h\mathbf{v}_h,p_h)&=(\mathbf{f},\mathbf{v}_h) & \forall\,\mathbf{v}_h\in \mathbf{V}_{h0}^{\rm QLTZ}; \\ 
(\mathrm{div}_h\mathbf{u}_h,q_h)&=0 & \forall\,q_h\in \mathring{W}_h^\q.
\end{array}
\right.
\end{equation}

\begin{lemma}\label{lem:discinfsup}
The inf-sup condition holds for $\mathbf{V}_{h0}^{\rm QLTZ}\times \mathring{W}_h^\q$ that
\begin{equation}\label{eq:discinfsup}
\inf_{q_h\in \mathring{W}_h^\q}\sup_{\mathbf{v}_h\in \mathbf{V}_{h0}^{\rm QLTZ}}\frac{(\mathrm{div}_h \mathbf{v}_h,q_h)}{\|\mathbf{v}_h\|_{1,h}\|q_h\|_{0,\Omega}}\geqslant C \mbox{(independent of $h$)}.
\end{equation}
\end{lemma}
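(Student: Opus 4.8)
The plan is to verify \eqref{eq:discinfsup} by Fortin's criterion, taking the interpolation operator $\undertilde{\Pi}{}_h^{\rm QLTZ}$ constructed above as the Fortin operator. Three properties are needed, all of which are already in hand. First, $\undertilde{\Pi}{}_h^{\rm QLTZ}$ maps $(H^1_0(\Omega))^2$ into $\mathbf{V}_{h0}^{\rm QLTZ}$; this was recorded when the operator was defined. Second, it commutes with the divergence against the pressure space: since every $q_h\in\mathring{W}_h^\q$ is cellwise in $P_1(Q)$, the local identity $\int_Q\mathrm{div}\,\undertilde{\Pi}{}_Q^{\rm QLTZ}\mathbf{w}\,q\dx=\int_Q\mathrm{div}\,\mathbf{w}\,q\dx$ for all $q\in P_1(Q)$ established in the construction of $\undertilde{\Pi}{}_Q^{\rm QLTZ}$ sums over $Q\in\mathcal{Q}_h$ to give $(\mathrm{div}_h\undertilde{\Pi}{}_h^{\rm QLTZ}\mathbf{w},q_h)=(\mathrm{div}\,\mathbf{w},q_h)$ for all $q_h\in\mathring{W}_h^\q$. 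Third, $\undertilde{\Pi}{}_h^{\rm QLTZ}$ is bounded in the broken $H^1$ norm uniformly in $h$: Lemma~\ref{lem:interr}(2) with $s=1$ gives $\|\mathbf{w}-\undertilde{\Pi}{}_h^{\rm QLTZ}\mathbf{w}\|_{1,h}\lesssim(1+h)\,|\mathbf{w}|_{1,\Omega}\lesssim|\mathbf{w}|_{1,\Omega}$, whence $\|\undertilde{\Pi}{}_h^{\rm QLTZ}\mathbf{w}\|_{1,h}\lesssim\|\mathbf{w}\|_{1,\Omega}$ by the triangle inequality and the Poincar\'e--Friedrichs inequality on $(H^1_0(\Omega))^2$.

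With these three facts the argument is short. Given $q_h\in\mathring{W}_h^\q\subset L^2_0(\Omega)$, the surjectivity $\mathrm{div}(H^1_0(\Omega))^2=L^2_0(\Omega)$ together with the continuous inf-sup inequality in \eqref{eq:contsta} (equivalently Lemma~\ref{lem:conbihsto}) supplies $\mathbf{v}\in(H^1_0(\Omega))^2$ with $\mathrm{div}\,\mathbf{v}=q_h$ and $\|\mathbf{v}\|_{1,\Omega}\leqslant C_2^{-1}\|q_h\|_{0,\Omega}$. Put $\mathbf{v}_h:=\undertilde{\Pi}{}_h^{\rm QLTZ}\mathbf{v}\in\mathbf{V}_{h0}^{\rm QLTZ}$. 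By the commuting property, $(\mathrm{div}_h\mathbf{v}_h,q_h)=(\mathrm{div}\,\mathbf{v},q_h)=\|q_h\|_{0,\Omega}^2$, while the stability gives $\|\mathbf{v}_h\|_{1,h}\lesssim\|\mathbf{v}\|_{1,\Omega}\lesssim\|q_h\|_{0,\Omega}$. Dividing,
\[
\sup_{\mathbf{v}_h\in\mathbf{V}_{h0}^{\rm QLTZ}}\frac{(\mathrm{div}_h\mathbf{v}_h,q_h)}{\|\mathbf{v}_h\|_{1,h}\|q_h\|_{0,\Omega}}\geqslant\frac{\|q_h\|_{0,\Omega}^{2}}{\|\mathbf{v}_h\|_{1,h}\|q_h\|_{0,\Omega}}\gtrsim 1,
\]
and taking the infimum over $0\neq q_h\in\mathring{W}_h^\q$ yields \eqref{eq:discinfsup} with a constant depending only on $C_2$ and the shape regularity of $\{\mathcal{Q}_h\}$.

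In this organisation essentially all of the substance has been front-loaded: the well-definedness of $\undertilde{\Pi}{}_Q^{\rm QLTZ}$ (which rested on the determinant computation above) and Lemma~\ref{lem:interr} (the $h$-uniform approximation and stability bounds, which use the mesh regularity and $P_1(Q)\subset P_Q^{\rm QLTZ}$). Consequently the present proof is little more than an assembly of those facts, and I do not expect a serious obstacle. The one point that deserves a word of care is the uniform boundedness of $\undertilde{\Pi}{}_h^{\rm QLTZ}$ in the \emph{full} broken norm $\|\cdot\|_{1,h}$ rather than only the seminorm $|\cdot|_{1,h}$; but since $\|\mathbf{v}-\undertilde{\Pi}{}_h^{\rm QLTZ}\mathbf{v}\|_{0,\Omega}\lesssim h\,|\mathbf{v}|_{1,\Omega}$ is itself bounded by $|\mathbf{v}|_{1,\Omega}$ uniformly in $h$, this is harmless. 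An alternative would be Stenberg's macroelement technique — checking a local inf-sup condition on each quadrilateral and patching — but since a divergence-commuting, $H^1$-stable interpolator is already available, the Fortin route above is the cleaner one.
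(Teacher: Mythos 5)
Your proof is correct and follows essentially the same route as the paper: take a continuous right inverse $\mathbf{v}$ of the divergence for the given $q_h$, set $\mathbf{v}_h=\undertilde{\Pi}{}_h^{\rm QLTZ}\mathbf{v}$, and combine the divergence-moment-preserving property of the interpolator with its $H^1$-stability from Lemma~\ref{lem:interr}. The only (immaterial) difference is that the paper observes the stronger identity $\mathrm{div}_h\mathbf{v}_h=\mathrm{div}\,\mathbf{v}$ cellwise (since both sides lie in $P_1(Q)$ and have equal moments against $P_1(Q)$), whereas you use only the weak form of the commuting property, which suffices here.
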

\begin{proof}
Given $q_h\in \mathring{W}_h^\q\subset L^2_0(\Omega)$, there exists $\mathbf{w}\in (H^1_0(\Omega))^2$, such that $q_h=\mathrm{div}\mathbf{w}$, and $\|\mathbf{w}\|_{1,\Omega}\leqslant C_1\|\mathrm{div}\mathbf{w}\|_{0,\Omega}$\cite{Girault.V;Raviart.P}. Here $C_1$ is a generic constant depending on the domain only. Define $\mathbf{w}_h:=\undertilde{\Pi}{}_h^{\rm QLTZ}\mathbf{w}$, and then $\int_Q \mathrm{div}\mathbf{w}_hq\dx=\int_Q\mathrm{div}\mathbf{w}q\dx$ for $q\in P_1(Q)$. Since $\mathrm{div}(\mathbf{w}_h|_T)\in P_1(Q)$ and $(\mathrm{div}\mathbf{w})|_Q=q_h|_Q\in P_1(Q)$, this implies $\mathrm{div}(\mathbf{w}_h|_Q)=(\mathrm{div}\mathbf{w})|_Q$ and further $\mathrm{div}_h\mathbf{w}_h=\mathrm{div}\mathbf{w}$. Therefore,
$$
\sup_{\mathbf{v}_h\in\mathbf{V}_{h0}^{\rm QLTZ}}\frac{(\mathrm{div}_h\mathbf{v}_h,q_h)}{\|\mathbf{v}_h\|_{1,h}\|q_h\|_{0,\Omega}}\geqslant \frac{(\mathrm{div}_h\mathbf{w}_h,q_h)}{\|\mathbf{w}_h\|_{1,h}\|q_h\|_{0,\Omega}}\geqslant C' \frac{(div\mathbf{w},q_h)}{\|\mathbf{w}\|_{1,\Omega}\|q_h\|_{0,\Omega}}\geqslant C.
$$
The last second inequality follows from Lemma \ref{lem:interr}. This finishes the proof.
\end{proof}
\begin{remark}\label{rem:discsur}
By the proof of Lemma \ref{lem:discinfsup}, $\mathrm{div}_h\mathbf{V}_{h0}^{\rm QLTZ}=\mathring{W}_h^\q$. Simultaneously, $\mathrm{curl}_h\mathbf{V}_{h0}^{\rm QLTZ}=\mathring{W}_h^\q$.
\end{remark}

Again, since the edge averages of $\mathbf{w}_h\in \mathbf{V}_{h0}^{\rm QLTZ}$ are continuous across internal edges, by the standard technique, the theorem below follows from Lemmas \ref{lem:clm} and \ref{lem:discinfsup}.

\begin{theorem}
Let $(\mathbf{u},p)$ and $(\mathbf{u}_h,p_h)$ be the solutions of \eqref{eq:stokesvp} and \eqref{eq:stokesfep}, respectively. Then $\mathrm{div}_h\mathbf{u}_h=0$. Moreover,
\begin{enumerate}
\item If $\mathbf{u}\in (H^2(\Omega)\cap H^1_0(\Omega))^2$ and $p\in H^1(\Omega)\cap L^2_0(\Omega)$, then 
$$
\|\mathbf{u}-\mathbf{u}_h\|_{1,h}+\|p-p_h\|_{0,\Omega}\lesssim h(|\mathbf{u}|_{2,\Omega}+|p|_{1,\Omega});
$$
\item If $\Omega$ is convex, then $\|\mathbf{u}-\mathbf{u}_h\|_{0,\Omega}\lesssim h^2\|\mathbf{f}\|_{0,\Omega}$.
\end{enumerate}
\end{theorem}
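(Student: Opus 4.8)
The plan is to reduce part~(1) to the abstract quasi-optimality estimate of Lemma~\ref{lem:clm} and part~(2) to a duality (Aubin--Nitsche) argument, after first checking that the pair satisfies both stability conditions. For \textbf{SC 1}, the inf--sup inequality is Lemma~\ref{lem:discinfsup}, and coercivity on $\mathbf{Z}_h$ follows from the discrete Poincar\'e--Friedrichs inequality $\|\mathbf{v}_h\|_{0,\Omega}\lesssim|\mathbf{v}_h|_{1,h}$ on $\mathbf{V}_{h0}^{\rm QLTZ}$, which holds by the standard argument since the edge averages of functions in $V_{h0}^{\rm QLTZ}$ are continuous across interior edges and vanish on boundary edges (so $(\nabla_h\mathbf{v}_h,\nabla_h\mathbf{v}_h)\cequiv\|\mathbf{v}_h\|_{1,h}^2$ there). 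For \textbf{SC 2}, if $(\mathrm{div}_h\mathbf{v}_h,q_h)=0$ for all $q_h\in\mathring{W}_h^\q$, then $\mathrm{div}_h\mathbf{v}_h\in W_h^\q$ is $L^2(\Omega)$-orthogonal to $\mathring{W}_h^\q$, hence a constant, while $\int_\Omega\mathrm{div}_h\mathbf{v}_h\dx=\sum_{Q}\int_{\partial Q}\mathbf{v}_h\cdot\mathbf{n}\dS=0$ again by the edge-average continuity and the boundary condition; thus $\mathbf{Z}_h=\{\mathbf{v}_h\in\mathbf{V}_{h0}^{\rm QLTZ}:\mathrm{div}_h\mathbf{v}_h=0\}$. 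In particular the second equation of \eqref{eq:stokesfep} puts $\mathbf{u}_h$ in $\mathbf{Z}_h$, and testing it with $q_h=\mathrm{div}_h\mathbf{u}_h\in\mathring{W}_h^\q$ gives $\|\mathrm{div}_h\mathbf{u}_h\|_{0,\Omega}=0$, which is the first assertion; moreover Lemma~\ref{lem:clm} applies and gives existence, uniqueness and quasi-optimality.

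For part~(1) I would estimate the three terms on the right of Lemma~\ref{lem:clm}. The two approximation terms are bounded using Lemma~\ref{lem:interr}, $\inf_{\mathbf{v}_h}\|\mathbf{u}-\mathbf{v}_h\|_{1,h}\le\|\mathbf{u}-\undertilde{\Pi}{}_h^{\rm QLTZ}\mathbf{u}\|_{1,h}\lesssim h|\mathbf{u}|_{2,\Omega}$, together with the standard approximation property $\inf_{q_h\in\mathring{W}_h^\q}\|p-q_h\|_{0,\Omega}\lesssim h|p|_{1,\Omega}$ of the discontinuous piecewise linear space. For the consistency functional $E_h(\mathbf{u},p;\mathbf{v}_h):=(\nabla_h\mathbf{u},\nabla_h\mathbf{v}_h)+(\mathrm{div}_h\mathbf{v}_h,p)-(\mathbf{f},\mathbf{v}_h)$, one integrates by parts cell by cell and invokes the strong form of \eqref{eq:stokesbvp} to write $E_h(\mathbf{u},p;\mathbf{v}_h)$ as a sum over $e\in\mathcal{E}_h$ of integrals of an interelement flux built from $\nabla\mathbf{u}$ and $p$ against $\llbracket\mathbf{v}_h\rrbracket_e$ (with $\llbracket\mathbf{v}_h\rrbracket_e$ read as $\mathbf{v}_h|_e$ on boundary edges). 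Because $\fint_e\llbracket\mathbf{v}_h\rrbracket_e\dS=0$ on every edge, one subtracts the edge mean of the flux; a Cauchy--Schwarz estimate with the trace/approximation bounds $\|\sigma-\fint_e\sigma\dS\|_{0,e}\lesssim h^{1/2}|\sigma|_{1,\omega_e}$ and $\|\llbracket\mathbf{v}_h\rrbracket_e\|_{0,e}\lesssim h^{1/2}|\mathbf{v}_h|_{1,h,\omega_e}$ (the latter using $\fint_e\llbracket\mathbf{v}_h\rrbracket_e\dS=0$) then yields $|E_h(\mathbf{u},p;\mathbf{v}_h)|\lesssim h(|\mathbf{u}|_{2,\Omega}+|p|_{1,\Omega})\|\mathbf{v}_h\|_{1,h}$, and part~(1) follows.

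For part~(2) I would run a duality argument, using the $H^2\times H^1$-regularity of the Stokes problem on the convex domain $\Omega$ (so $|\mathbf{u}|_{2,\Omega}+|p|_{1,\Omega}\lesssim\|\mathbf{f}\|_{0,\Omega}$, and for $\mathbf{g}\in(L^2(\Omega))^2$ the dual problem $-\Delta\boldsymbol\phi+\nabla\psi=\mathbf{g}$, $\mathrm{div}\boldsymbol\phi=0$, $\boldsymbol\phi|_{\partial\Omega}=\mathbf{0}$ has $|\boldsymbol\phi|_{2,\Omega}+|\psi|_{1,\Omega}\lesssim\|\mathbf{g}\|_{0,\Omega}$). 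Integrating $(\mathbf{g},\mathbf{u}-\mathbf{u}_h)$ by parts cell by cell, the pressure term $(\psi,\mathrm{div}_h(\mathbf{u}-\mathbf{u}_h))$ vanishes because $\mathrm{div}_h(\mathbf{u}-\mathbf{u}_h)=\mathrm{div}\,\mathbf{u}-\mathrm{div}_h\mathbf{u}_h=0$, leaving $(\nabla_h\boldsymbol\phi,\nabla_h(\mathbf{u}-\mathbf{u}_h))$ minus an edge term $N_h(\boldsymbol\phi,\psi;\mathbf{u}-\mathbf{u}_h)$ of the same type as $E_h$; since $\fint_e\llbracket\mathbf{u}-\mathbf{u}_h\rrbracket_e\dS=0$, the same edge estimates give $|N_h|\lesssim h(|\boldsymbol\phi|_{2,\Omega}+|\psi|_{1,\Omega})|\mathbf{u}-\mathbf{u}_h|_{1,h}\lesssim h^2\|\mathbf{g}\|_{0,\Omega}\|\mathbf{f}\|_{0,\Omega}$ by part~(1). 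In the remaining term insert $\boldsymbol\phi_h:=\undertilde{\Pi}{}_h^{\rm QLTZ}\boldsymbol\phi$: the $\boldsymbol\phi-\boldsymbol\phi_h$ piece is $\lesssim|\boldsymbol\phi-\boldsymbol\phi_h|_{1,h}|\mathbf{u}-\mathbf{u}_h|_{1,h}\lesssim h^2\|\mathbf{g}\|_{0,\Omega}\|\mathbf{f}\|_{0,\Omega}$, while for the $\boldsymbol\phi_h$ piece the error equation
$$
(\nabla_h(\mathbf{u}-\mathbf{u}_h),\nabla_h\mathbf{v}_h)+(\mathrm{div}_h\mathbf{v}_h,p-p_h)=E_h(\mathbf{u},p;\mathbf{v}_h),\qquad\mathbf{v}_h\in\mathbf{V}_{h0}^{\rm QLTZ},
$$
with $\mathbf{v}_h=\boldsymbol\phi_h$ gives $(\nabla_h(\mathbf{u}-\mathbf{u}_h),\nabla_h\boldsymbol\phi_h)=E_h(\mathbf{u},p;\boldsymbol\phi_h)-(\mathrm{div}_h\boldsymbol\phi_h,p-p_h)$; here $(\mathrm{div}_h\boldsymbol\phi_h,p-p_h)=0$ because the divergence-preserving property of $\undertilde{\Pi}{}_Q^{\rm QLTZ}$ together with $\mathrm{div}\boldsymbol\phi=0$ force $\mathrm{div}_h\boldsymbol\phi_h=0$ (as $\mathrm{div}(\boldsymbol\phi_h|_Q)\in P_1(Q)$), and $E_h(\mathbf{u},p;\boldsymbol\phi_h)=E_h(\mathbf{u},p;\boldsymbol\phi_h-\boldsymbol\phi)$ because $E_h$ vanishes on $(H^1_0(\Omega))^2$; feeding $\|\boldsymbol\phi_h-\boldsymbol\phi\|_{0,Q}\lesssim h^2|\boldsymbol\phi|_{2,Q}$ and $|\boldsymbol\phi_h-\boldsymbol\phi|_{1,Q}\lesssim h|\boldsymbol\phi|_{2,Q}$ into the edge estimates gives $|E_h(\mathbf{u},p;\boldsymbol\phi_h)|\lesssim h^2(|\mathbf{u}|_{2,\Omega}+|p|_{1,\Omega})|\boldsymbol\phi|_{2,\Omega}\lesssim h^2\|\mathbf{f}\|_{0,\Omega}\|\mathbf{g}\|_{0,\Omega}$. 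Taking the supremum over $\mathbf{g}$ then yields $\|\mathbf{u}-\mathbf{u}_h\|_{0,\Omega}\lesssim h^2\|\mathbf{f}\|_{0,\Omega}$.

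The routine ingredients are the interpolation and discontinuous $P_1$ approximation bounds. The crux of the argument is the treatment of the nonconformity: one must exploit the continuity of the edge averages to obtain first-order consistency (the subtraction of the edge means of the flux), and, in the duality step, use two structural facts simultaneously---that $\mathrm{div}_h(\mathbf{u}-\mathbf{u}_h)=0$ removes the pressure coupling on the dual side, and that $\undertilde{\Pi}{}_h^{\rm QLTZ}$ preserves the elementwise divergence against $P_1$, so that $\mathrm{div}\boldsymbol\phi=0$ implies $\mathrm{div}_h\boldsymbol\phi_h=0$ and deletes $p-p_h$ from the bound. Careful bookkeeping of the $h$-powers in these edge terms (two half-powers combining to the $h$ needed in part~(1), and an extra half-power from the $L^2$ interpolation estimate producing $h^2$ in part~(2)) is the only remaining work.
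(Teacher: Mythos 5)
Your proposal is correct and follows exactly the route the paper intends: the paper simply remarks that the theorem ``follows from Lemmas \ref{lem:clm} and \ref{lem:discinfsup} by the standard technique'' using the continuity of edge averages, and your write-up is a sound, complete elaboration of that standard argument (verification of \textbf{SC 1}--\textbf{SC 2}, edge-mean subtraction for the consistency error, and the Aubin--Nitsche duality with the divergence-preserving property of $\undertilde{\Pi}{}_h^{\rm QLTZ}$ for the $L^2$ bound). No gaps.
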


%

\subsection{Discrete Stokes complex}

\subsubsection{A Morley element on convex quadrilateral grid}
This quadrilateral Morley element is given by Park-Sheen \cite{Park.C;Sheen.D2013}. \\

{\centering
\fbox{
\begin{minipage}{0.95\textwidth}
The quadrilateral Morley element is defined by $(Q,P_Q^M,D_Q^M)$ with
\begin{enumerate}
\item $Q$ is a convex quadrilateral;
\item $P_Q^M=P_2(Q)+span\{\xi^3,\eta^3\}$;
\item the components of $D_Q^M=\{d_i^M,d_{i+4}^M\}_{i=1:4}$ for any $v\in H^2(Q)$ are: 
$$
d^M_i(v)=v(a_i),\ a_i\ \mbox{the\ vertices\ of}\ T;\ \ d^M_{i+4}(v)=\fint_{e_i}\partial_{\mathbf{n}_{e_i}}v\dS,\ e_i\ \mbox{the\ edges\ of}\ T.
$$
\end{enumerate}
\end{minipage}
}}

Given a regular convex quadrilateral triangulation of $\Omega$, define the Morley element space $M_h^\q$ as
\begin{multline*}
\qquad M_h^\q:=\{w_h\in L^2(\Omega):w_h|_Q\in P_Q^M,\ w_h(a)\ \mbox{is\ continuous\ at}\ a\in\mathcal{N}_h,\  \\ 
\fint_e\partial_{\mathbf{n}_e}w_h\dS\ \mbox{is\ continuous\ across}\ e\in\mathcal{E}_h^i\}.\qquad
\end{multline*}
And, associated with $H^2_0(\Omega)$, define 
$$
M_{h0}^\q:=\{w_h\in M_h:w_h(a)\ \mbox{vanishes\ at}\ a\in\mathcal{N}_h^b,\ \fint_e\partial_{\mathbf{n}_e}w_h\dS\ \mbox{vanishes\ at}\ e\in\mathcal{E}_h^b\}.
$$

The Morley element provides consistent approximation of fourth-order problems on quadrilateral grids. Let us consider the model problem: find $u\in H^2_0(\Omega)$, such that
\begin{equation}\label{eq:bihvp}
(\nabla^2u,\nabla^2v)=(f,v)\quad \forall\,v\in H^2_0(\Omega).
\end{equation}
The finite element problem is to find $u_h\in M_{h0}^\q$, such that
\begin{equation}\label{eq:bihfep}
(\nabla_h^2u_h,\nabla_h^2v_h)=(f,v_h)\quad\forall\,v_h\in M_{h0}^\q.
\end{equation}
\begin{lemma}\label{lem:parksheen}\cite{Park.C;Sheen.D2013}
Let $u$ and $u_h$ be the solution of \eqref{eq:bihvp} and \eqref{eq:bihfep}, respectively.
\begin{enumerate}
\item Assume $u\in H^3(\Omega)\cap H^2_0(\Omega)$, then $|u-u_h|_{2,h}\lesssim h(|u|_{3,\Omega}+\|f\|_{0,\Omega})$.
\item If further $\Omega$ is a convex polygon, then $|u-u_h|_{1,h}\lesssim h^2(|u|_{3,\Omega}+\|f\|_{0,\Omega})$.
\end{enumerate}
\end{lemma}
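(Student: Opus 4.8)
Since Lemma \ref{lem:parksheen} is a nonconforming finite element error estimate, the plan is to invoke the second Strang lemma and supply its two ingredients: an interpolation error bound and a consistency (nonconforming) error bound, both measured in the broken energy norm $|\cdot|_{2,h}$. First I would define the local Morley interpolation $\Pi_Q^M:H^2(Q)\to P_Q^M$ by matching the degrees of freedom in $D_Q^M$ (vertex values and edge-averaged normal derivatives), and the global interpolation $\Pi_h^M:H^2(\Omega)\to M_h^\q$ cellwise; one checks immediately that $\Pi_h^M$ maps $H^2_0(\Omega)$ into $M_{h0}^\q$, because the vertex values and edge-averaged normal derivatives of $\Pi_h^M u$ vanish on $\partial\Omega$ when $u\in H^2_0(\Omega)$. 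Since $P_2(Q)\subset P_Q^M$, a Bramble--Hilbert/scaling argument carried out directly on the convex quadrilateral cell, with constants tracked through the shape regularity indicator $\mathcal{R}_Q$, gives $|u-\Pi_h^M u|_{m,h}\lesssim h^{3-m}|u|_{3,\Omega}$ for $0\leqslant m\leqslant 3$ and $u\in H^3(\Omega)$; in particular $|u-\Pi_h^M u|_{2,h}\lesssim h|u|_{3,\Omega}$.

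Next I would establish the discrete well-posedness of \eqref{eq:bihfep} via a broken Poincar\'e--Friedrichs inequality showing $(\nabla_h^2 v_h,\nabla_h^2 v_h)\gtrsim \|v_h\|_{2,h}^2$ on $M_{h0}^\q$, which yields existence and uniqueness of $u_h$. For the consistency error, for $v_h\in M_{h0}^\q$ I would integrate $(\nabla_h^2 u,\nabla_h^2 v_h)-(f,v_h)$ by parts cell by cell; using $\Delta^2 u=f$ in the interior the remaining boundary contributions pair $\partial_{\mathbf{n}}\partial_{\mathbf{n}} u$ with $\partial_{\mathbf{n}} v_h$ and $\partial_{\mathbf{n}}\partial_{\tau} u$ with $\partial_{\tau} v_h$ on each edge. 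On an interior edge one subtracts the edge-average of the relevant quantity and exploits that the edge-average of $\partial_{\mathbf{n}} v_h$ is continuous (and vanishes on $\partial\Omega$) together with continuity of the vertex values of $v_h$, so that the non-matching terms are $O(h)$ by an edge-wise Bramble--Hilbert estimate; this gives $|(\nabla_h^2 u,\nabla_h^2 v_h)-(f,v_h)|\lesssim h(|u|_{3,\Omega}+\|f\|_{0,\Omega})\,\|v_h\|_{2,h}$. Combining this with the interpolation bound and coercivity through Strang's lemma proves part (1).

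For part (2) I would run an Aubin--Nitsche duality argument: introduce the auxiliary biharmonic problem with right-hand side built from $u-u_h$ in the $H^{-1}$ pairing, and use convexity of $\Omega$ to obtain the elliptic regularity estimate $\|w\|_{3,\Omega}\lesssim |u-u_h|_{1,h}$. Expanding $|u-u_h|_{1,h}^2$ against this auxiliary solution and inserting $\Pi_h^M w$ splits the error into one approximation term, controlled by the interpolation estimates of Step 1, and two consistency terms --- one coming from the primal problem tested against $w-\Pi_h^M w$ and one from the dual problem tested against $u-u_h$ --- each of which gains an extra factor $h$ by the same edge-wise Bramble--Hilbert/cancellation argument as above, now using the $H^3$-smoothness of $w$. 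This yields the $O(h^2)$ bound in $|\cdot|_{1,h}$.

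The main obstacle is the consistency error analysis (Step 2, and its sharpened reuse inside the duality argument). Because the quadrilateral Morley shape functions are built directly on each convex quadrilateral rather than pulled back from a reference square, the trace and Bramble--Hilbert estimates on each edge must be performed on the physical cell with every constant made uniform through $\mathcal{R}_Q$; the delicate point is to verify that the cancellations afforded by continuity of vertex values and of edge-averaged normal derivatives are exactly sufficient to absorb the inter-element mismatch of the Morley functions, uniformly in $h$. This is the technical core of the Park--Sheen argument, and once it is in place the interpolation estimate, the coercivity, and the assembly of Strang's lemma and the duality argument are all routine.
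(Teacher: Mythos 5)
The paper gives no proof of this lemma: it is quoted verbatim from \cite{Park.C;Sheen.D2013}, and the cited work proves it by exactly the route you outline --- the second Strang lemma with a cellwise interpolation estimate (using $P_2(Q)\subset P_Q^M$ and shape regularity on the physical quadrilateral), a consistency estimate exploiting continuity of vertex values and of edge-averaged normal derivatives, and an Aubin--Nitsche duality argument with $H^3$ regularity on a convex polygon for the $|\cdot|_{1,h}$ bound. Your proposal is therefore a correct sketch of essentially the same argument, and you correctly identify the edge-wise consistency analysis on physical (non-affine-equivalent) quadrilaterals as the technical core.
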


\subsubsection{A discrete Stokes complex}

The lemma below plays a fundamental role in the construction of the discrete Stokes complex.
\begin{lemma}\label{lem:nullembd}
$\mathbf{curl}_h M_{h0}^\q=\widetilde{\mathbf{V}}_{h0}^{\rm QLTZ}:=\{\mathbf{w}_h\in\mathbf{V}_{h0}^{\rm QLTZ}:\mathrm{div}_h\mathbf{w}_h=0\}$.
\end{lemma}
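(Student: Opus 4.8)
The plan is to prove the two inclusions $\mathbf{curl}_h M_{h0}^\q \subseteq \widetilde{\mathbf{V}}_{h0}^{\rm QLTZ}$ and $\widetilde{\mathbf{V}}_{h0}^{\rm QLTZ} \subseteq \mathbf{curl}_h M_{h0}^\q$ separately, and the key device throughout is that the two elements share compatible nodal parameters: differentiating a Morley function $w_h$ on a cell $Q$ turns vertex values and edge-normal-derivative averages into quantities controlled by edge-tangential and edge-normal averages of $\mathbf{curl}\, w_h$.

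For the forward inclusion, first I would check the local statement $\mathbf{curl}\, P_Q^M \subseteq (P_Q^{\rm QLTZ})^2$: since $P_Q^M = P_2(Q) + \mathrm{span}\{\xi^3,\eta^3\}$ and $\mathbf{curl}$ lowers polynomial degree by one while sending $\xi^3 \mapsto (0, 3\xi^2)$-type terms (up to the constant linear change of variables between $(\xi,\eta)$ and $(x,y)$) and $\eta^3$ similarly into $\mathrm{span}\{\xi^2,\eta^2\}$, we land in $P_1(Q) + \mathrm{span}\{\xi^2,\eta^2\} = P_Q^{\rm QLTZ}$ componentwise. Second, $\mathrm{div}_h \mathbf{curl}_h w_h = 0$ cellwise is immediate. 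Third, and this is the one genuine continuity check: I must show $\mathbf{curl}_h w_h$ has continuous edge averages across each interior edge and vanishing edge averages on boundary edges, i.e.\ lies in $\mathbf{V}_{h0}^{\rm QLTZ}$. On an edge $e$ with endpoints $a, a'$, the tangential component $\fint_e \tau_e \cdot \mathbf{curl}\, w_h \dS = \fint_e \partial_{\tau_e} w_h \dS = \frac{1}{|e|}(w_h(a') - w_h(a))$, which is single-valued because Morley functions are vertex-continuous (and zero on boundary edges because the vertex values vanish there); the normal component $\fint_e \mathbf{n}_e \cdot \mathbf{curl}\, w_h \dS = \pm \fint_e \partial_{\mathbf{n}_e} w_h \dS$ is single-valued because the edge-normal-derivative averages are matched in $M_h^\q$ (and vanish on boundary edges by definition of $M_{h0}^\q$). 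So the two scalar averages that define membership in $V_{h0}^{\rm QLTZ}$ are exactly the quantities continuity is imposed on in $M_{h0}^\q$.

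For the reverse inclusion I would argue by dimension count combined with injectivity of $\mathbf{curl}_h$ on $M_{h0}^\q$. The map $\mathbf{curl}_h : M_{h0}^\q \to \widetilde{\mathbf{V}}_{h0}^{\rm QLTZ}$ is well-defined by the forward inclusion; its kernel consists of piecewise-$P_Q^M$ functions that are cellwise constant, hence globally constant by vertex continuity, hence zero by the boundary conditions, so $\mathbf{curl}_h$ is injective. It then suffices to show $\dim M_{h0}^\q = \dim \widetilde{\mathbf{V}}_{h0}^{\rm QLTZ}$. Counting degrees of freedom: $\dim M_{h0}^\q = \mathfrak{X}_I + \mathfrak{E}_I$ (interior vertex values plus interior edge normal-derivative averages). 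For the velocity side, $\dim \mathbf{V}_{h0}^{\rm QLTZ} = 2\mathfrak{E}_I + 2\mathfrak{F}$ (two edge-average components per interior edge plus two interior bubble parameters $c_1,c_2$ per cell via $\phi_0$), and by Remark~\ref{rem:discsur} $\mathrm{div}_h$ maps $\mathbf{V}_{h0}^{\rm QLTZ}$ onto $\mathring{W}_h^\q$ with $\dim \mathring{W}_h^\q = 3\mathfrak{F} - 1$, so $\dim \widetilde{\mathbf{V}}_{h0}^{\rm QLTZ} = 2\mathfrak{E}_I + 2\mathfrak{F} - (3\mathfrak{F} - 1) = 2\mathfrak{E}_I - \mathfrak{F} + 1$. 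Invoking Euler's formula $\mathfrak{F} + \mathfrak{X} = \mathfrak{E} + 1$ in the form relating interior/boundary counts (using $\mathfrak{X}_B = \mathfrak{E}_B$ and $\mathfrak{E} = \mathfrak{E}_I + \mathfrak{E}_B$, $\mathfrak{X} = \mathfrak{X}_I + \mathfrak{X}_B$), one gets $\mathfrak{X}_I + \mathfrak{E}_I = 2\mathfrak{E}_I - \mathfrak{F} + 1$, i.e.\ the two dimensions agree, so the injective $\mathbf{curl}_h$ is an isomorphism onto $\widetilde{\mathbf{V}}_{h0}^{\rm QLTZ}$.

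The main obstacle I anticipate is not the forward inclusion (which is a clean parameter-matching argument) but making the dimension count fully rigorous: I need the exact dimension of $\mathbf{V}_{h0}^{\rm QLTZ}$ — in particular that the local $5$-dof QLTZ element really contributes two genuinely free interior parameters per cell after the global edge-average matching, with no hidden global linear dependence — and I need the surjectivity of $\mathrm{div}_h$ onto all of $\mathring{W}_h^\q$ together with its exact $(3\mathfrak{F}-1)$-dimensional target; the first is handled by the explicit nodal basis $\{\phi_i\}_{i=0}^4$ and the second is exactly Remark~\ref{rem:discsur}. An alternative that avoids the count is to exhibit a direct local inverse: given $\mathbf{w}_h \in \widetilde{\mathbf{V}}_{h0}^{\rm QLTZ}$, reconstruct $w_h$ cellwise as a potential (possible cellwise since $\mathrm{div}_h \mathbf{w}_h = 0$ on each $Q$ and $Q$ is simply connected), fix the additive constants by a spanning-tree walk through the dual graph using that the tangential edge averages of $\mathbf{w}_h$ — hence the vertex-value differences of the candidate $w_h$ — are single-valued, check the resulting $w_h$ is globally vertex-continuous and has matched edge-normal-derivative averages (again because the normal edge averages of $\mathbf{w}_h$ are single-valued), and verify $w_h|_Q \in P_Q^M$; I would present the dimension-count version as the primary proof and remark that the explicit reconstruction gives the complex its concrete meaning.
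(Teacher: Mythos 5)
Your proof is correct and follows essentially the same route as the paper: the forward inclusion by matching the Morley nodal parameters to the QLTZ edge averages, then equality via the dimension count $\dim(\widetilde{\mathbf{V}}_{h0}^{\rm QLTZ})=2(\mathfrak{F}+\mathfrak{E}_I)-(3\mathfrak{F}-1)=\mathfrak{E}_I+\mathfrak{X}_I=\dim(M_{h0}^\q)$ using Remark~\ref{rem:discsur} and Euler's formula. You usefully make explicit two steps the paper leaves implicit --- the local check $\mathbf{curl}\,P_Q^M\subset(P_Q^{\rm QLTZ})^2$ and the injectivity of $\mathbf{curl}_h$ on $M_{h0}^\q$ --- but the argument is the same.
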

\begin{proof}
It is obvious that $\textbf{curl}_h M_{h0}^\q\subset \widetilde{\mathbf{V}}_{h0}^{\rm QLTZ}$. To prove the other direction, we only have to show that the dimension of the two spaces are the same.  By Remark \ref{rem:discsur}, 
\begin{multline*}
\dim(\widetilde{\mathbf{V}}_{h0}^{\rm QLTZ})=\dim(\mathbf{V}_{h0}^{\rm QLTZ})-\dim(\mathrm{div}_h\mathbf{V}_{h0}^{\rm QLTZ})=\dim(\mathbf{V}_{h0}^{\rm QLTZ})-\dim(\mathring{W}_h^\q) \\ 
=2(\mathfrak{F}+\mathfrak{E}_I)-(3\mathfrak{F}-1)=\mathfrak{E}_I+\mathfrak{X}_I=\dim(M_{h0}^\q)=\dim(\mathbf{curl}_hM_{h0}^\q).\qquad
\end{multline*}
This finishes the proof.
\end{proof}

Define $\Pi_h^{\q M}:H^2(\Omega)\to M_h^\q$ by $\Pi_h^{\q M}\varphi\in M_h^\q$ such that
$$
\Pi_h^{\q M}\varphi(a)=\varphi(a),\ \forall\,a\in \mathcal{N}_h, \ \mbox{and} \fint_e\partial_{\mathbf{n}_e}\Pi_h^{\q M}\varphi\dS=\fint_e\partial_{\mathbf{n}_e}\varphi\dS, \ \forall\,e\in\mathcal{E}_h,\ \varphi\in H^2(\Omega).
$$
Define $\Pi_h^0$ the $L^2$-projection to $W_h^\q$. Summing all discussions above, we obtain a main result of the paper as below.

\begin{theorem}\label{thm:disStoSim1}
The discrete Stokes complex holds as below:
\begin{equation}
\begin{array}{ccccccccc}
0 & \longrightarrow & M_{h0}^\q & \xrightarrow{\bs{\mrm{curl}}_h} & \mathbf{V}_{h0}^{\rm QLTZ} & \xrightarrow{\mrm{div}_h} & \mathring{W}_h^\q  & \longrightarrow & 0.
\end{array}
\end{equation}
%
Moreover,
\begin{equation}\label{eq:commdia}
\bs{\mrm{curl}}_h \Pi_{h}^{\q\bs{\mrm{M}}}=\undertilde{\Pi}{}_{h}^{\rm QLTZ}\bs{\mrm{curl}}\ \mbox{on}\ H^2_0(\Omega),\quad\mbox{and}\quad \mrm{div}_h\undertilde{\Pi}{}_{h}^{\rm QLTZ}=\Pi_h^0\mrm{div}\ \mbox{on}\ (H^1_0(\Omega))^2.
\end{equation}
\end{theorem}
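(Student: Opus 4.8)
The plan is to verify the three arrows of the complex separately---exactness at $M_{h0}^\q$, at $\mathbf{V}_{h0}^{\rm QLTZ}$, and at $\mathring{W}_h^\q$---and then verify the two commuting-diagram identities by local computations. For exactness at $M_{h0}^\q$ we must show $\bs{\mrm{curl}}_h$ is injective: if $w_h\in M_{h0}^\q$ and $\bs{\mrm{curl}}_hw_h=0$ cellwise, then $w_h$ is piecewise constant; since $P_Q^M\supset P_2(Q)$ and a piecewise constant function with continuous vertex values on a connected triangulation is globally constant, and it vanishes on boundary vertices, $w_h\equiv0$. Exactness at $\mathbf{V}_{h0}^{\rm QLTZ}$ is exactly Lemma~\ref{lem:nullembd}: $\ker(\mrm{div}_h)=\widetilde{\mathbf{V}}_{h0}^{\rm QLTZ}=\bs{\mrm{curl}}_hM_{h0}^\q$. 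Exactness at $\mathring{W}_h^\q$ (surjectivity of $\mrm{div}_h$) is Remark~\ref{rem:discsur}, which itself comes out of the proof of Lemma~\ref{lem:discinfsup}. Finally, injectivity of the leftmost $0\to M_{h0}^\q$ arrow and surjectivity onto the rightmost $0$ are vacuous. So the complex statement is essentially a repackaging of Lemmas~\ref{lem:discinfsup}, \ref{lem:nullembd} and Remark~\ref{rem:discsur}, plus the one-line injectivity argument for $\bs{\mrm{curl}}_h$.

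For the commuting relation $\bs{\mrm{curl}}_h\Pi_h^{\q\bs{\mrm{M}}}=\undertilde{\Pi}{}_h^{\rm QLTZ}\bs{\mrm{curl}}$ on $H^2_0(\Omega)$, I would work cell by cell and compare the two sides against the degrees of freedom $D_Q^{\rm QLTZ}$, i.e.\ the edge averages $d_i^{\rm QLTZ}$, $i=1:4$, and the domain average $d_0^{\rm QLTZ}$. Given $\varphi\in H^2_0(\Omega)$, set $\mathbf{v}:=\bs{\mrm{curl}}\varphi\in(H^1_0(\Omega))^2$. On a cell $Q$, $\bs{\mrm{curl}}_h\Pi_Q^{\q M}\varphi$ lies in $(P_Q^{\rm QLTZ})^2$ because $\bs{\mrm{curl}}P_Q^M=\bs{\mrm{curl}}(P_2(Q)+\mathrm{span}\{\xi^3,\eta^3\})\subset P_1(Q)+\mathrm{span}\{\xi^2,\eta^2\}=P_Q^{\rm QLTZ}$ (using $\partial_\r\xi=\partial_\s\eta=0$, $\partial_\s\xi=\partial_\r\eta=1$, so $\bs{\mrm{curl}}\xi^3$ and $\bs{\mrm{curl}}\eta^3$ have only $\xi^2$ and $\eta^2$ components). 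Hence it suffices to check that $\bs{\mrm{curl}}_h\Pi_Q^{\q M}\varphi$ and $\undertilde{\Pi}{}_Q^{\rm QLTZ}\mathbf{v}$ agree under each d.o.f. For the edge-average d.o.f.\ I would integrate by parts along $e_i$: $\fint_{e_i}\bs{\mrm{curl}}_h\Pi_Q^{\q M}\varphi\dS$ is expressible through the tangential derivative of $\Pi_Q^{\q M}\varphi$ plus the normal-derivative edge average $\fint_{e_i}\partial_{\mathbf{n}_{e_i}}\Pi_Q^{\q M}\varphi\dS$, and since $\Pi_Q^{\q M}\varphi$ matches $\varphi$ at the two endpoints of $e_i$ (vertex d.o.f.) and has matching normal-derivative average ($d_{i+4}^M$), this equals $\fint_{e_i}\bs{\mrm{curl}}\varphi\dS=\fint_{e_i}\mathbf{v}\dS$, which is precisely the edge component that Step~1 of $\undertilde{\Pi}{}_Q^{\rm QLTZ}$ reproduces; thus $\mathbf{w}^1$ of $\undertilde{\Pi}{}_Q^{\rm QLTZ}\mathbf{v}$ equals the ``edge part'' of $\bs{\mrm{curl}}_h\Pi_Q^{\q M}\varphi$. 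It then remains to match the $\phi_0$-component, which is governed by the $P_1(Q)$-divergence test in Step~2: here one uses $\mathrm{div}\,\bs{\mrm{curl}}=0$ identically, so $\int_Q\mathrm{div}(\bs{\mrm{curl}}_h\Pi_Q^{\q M}\varphi)q\dx=0$ for all $q$, forcing its $\mathbf{w}^2$-part to be zero, while $\int_Q\mathrm{div}(\mathbf{v}-\mathbf{w}^1)q\dx=\int_Q\mathrm{div}\mathbf{v}\,q\dx-\int_Q\mathrm{div}\mathbf{w}^1 q\dx$; one checks the right side also vanishes for $q\in P_1(Q)$ because $\mathrm{div}\mathbf{v}=0$ and, by the edge-matching just established, $\int_Q\mathrm{div}\mathbf{w}^1q\dx=\int_{\partial Q}\mathbf{w}^1\cdot\mathbf{n}\,q\dS-\int_Q\mathbf{w}^1\cdot\nabla q\dx$ reduces to the same boundary/interior data as for $\bs{\mrm{curl}}_h\Pi_Q^{\q M}\varphi$ — so the two $\phi_0$-components coincide (both being determined by the same well-posed $2\times2$ system from the previous lemma).

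The second identity $\mrm{div}_h\undertilde{\Pi}{}_h^{\rm QLTZ}=\Pi_h^0\mrm{div}$ on $(H^1_0(\Omega))^2$ is almost immediate from the last lemma. On each cell $Q$, $\mathrm{div}(\undertilde{\Pi}{}_Q^{\rm QLTZ}\mathbf{w})\in P_1(Q)$ and, by the lemma's final assertion, $\int_Q\mathrm{div}(\undertilde{\Pi}{}_Q^{\rm QLTZ}\mathbf{w})q\dx=\int_Q\mathrm{div}\mathbf{w}\,q\dx$ for all $q\in P_1(Q)$; since $(\Pi_h^0\mathrm{div}\mathbf{w})|_Q$ is the $L^2(Q)$-projection of $\mathrm{div}\mathbf{w}$ onto $P_1(Q)$, it is characterized by exactly the same orthogonality relations against $P_1(Q)$, and both sides lie in $P_1(Q)$, so they are equal on $Q$, hence globally.

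The main obstacle is the careful bookkeeping in the first commuting identity, specifically the normal-derivative edge term: one has to handle the integration-by-parts of $\bs{\mrm{curl}}$ along a slanted edge $e_i$ of the quadrilateral in the $\r O\s$ coordinates and confirm that the edge average of $\bs{\mrm{curl}}_h\Pi_Q^{\q M}\varphi$ really is recovered from the Morley d.o.f.\ of $\varphi$ alone (vertex values and normal-derivative averages), with no leftover dependence on the chosen reference frame or on the cell shape parameters $\alpha,\beta$. Once that edge identity is in place, the $\phi_0$-component matching follows from $\mathrm{div}\,\bs{\mrm{curl}}=0$ together with the unisolvence already proved, and nothing further is needed.
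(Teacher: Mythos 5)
Your proposal is correct and follows essentially the same route as the paper: the exactness of the complex is assembled from Lemma \ref{lem:discinfsup}, Remark \ref{rem:discsur} and Lemma \ref{lem:nullembd}, the first commuting identity is proved cellwise by matching the edge-average degrees of freedom (via the Morley vertex values and normal-derivative averages) and then fixing the $\phi_0$-component through $\mathrm{div}\,\bs{\mrm{curl}}=0$ and the unisolvence of the $2\times2$ system, and the second identity follows from the moment-preservation property of $\undertilde{\Pi}{}_Q^{\rm QLTZ}$ against $P_1(Q)$. The only quibble is the phrase ``forcing its $\mathbf{w}^2$-part to be zero,'' which is not literally true in general, but your closing parenthetical (both $\phi_0$-components solve the same well-posed system) is the correct and sufficient argument, matching the paper's.
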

\begin{proof}
The discrete Stokes complex follows from Lemma \ref{lem:nullembd} and Remark \ref{rem:discsur}. We only have to prove the commutativity \eqref{eq:commdia}.

Given $\varphi\in H^2_0(\Omega)$, by the definition of $\Pi_h^{\q M}$, we have for $e\in \mathcal{E}_h$ that
\begin{multline*}
\fint_e\mathbf{curl}_h\Pi_h^{\q M}\varphi\dS = \fint_e\mathbf{curl}_h\Pi_h^{\q M}\varphi\cdot\mathbf{n}_e\dS\mathbf{n}_e + \fint_e\mathbf{curl}_h\Pi_h^{\q M}\varphi\cdot\mathbf{\tau}_e\dS\mathbf{\tau}_e \\ 
= \fint_e\partial_{\tau_e}\Pi_h^{\q M}\varphi\dS\mathbf{n}_e+\fint_e\partial_{\mathbf{n}_e}\Pi_h^{\q M}\varphi\dS\mathbf{\tau}_e=(\Pi_h^{\q M}\varphi(e_L)-\Pi_h^M\varphi(e_R))\mathbf{n}_e + \fint_e\partial_{\mathbf{n}_e}\Pi_h^{\q M}\varphi\dS \mathbf{\tau}_e \\ 
= (\varphi(e_L)-\varphi(e_R))\mathbf{n}_e + \fint_e\partial_{\mathbf{n}_e}\varphi\dS\mathbf{\tau}_e = \fint_e\partial_{\tau_e}\varphi\dS\mathbf{n}_e+\fint_e\partial_{\mathbf{n}_e}\varphi\dS \mathbf{\tau}_e = \fint_e\mathbf{curl}\,\varphi\dS. 
\end{multline*}
Note that $\mathrm{div}_h\mathbf{curl}_h\Pi_h^{\q M}\varphi=0=\mathrm{div}\undertilde{\Pi}{}_h^{\rm QLTZ}\mathbf{curl}\,\varphi$. Therefore, $\mathbf{curl}_h\Pi_h^{\q M}\varphi=\undertilde{\Pi}{}_h^{\rm QLTZ}\mathbf{curl}\,\varphi$. Similarly we can prove for $\mathbf{v}\in (H^1_0(\Omega))^2$ that    $\mrm{div}_h\undertilde{\Pi}{}_{h}^{\rm QLTZ}\mathbf{v}=\Pi_h^0\mrm{div}\mathbf{v}$. This finishes the proof.
\end{proof}

Theorem \ref{thm:disStoSim1} can also be written as this exact sequence and commutative diagram:
\begin{equation}
\begin{array}{ccccccccc}
0 & ~~~\longrightarrow~~~ & H^2_0(\Omega) & ~~~\xrightarrow{\bs{\mrm{curl}}}~~~ & (H^1_0(\Omega))^2 & ~~~\xrightarrow{\mrm{div}}~~~ & L^2_0(\Omega)  & ~~~\rightarrow~~~ & 0  \\
 & & \downarrow \Pi_{h}^{\q\bs{\mrm{M}}} & & \downarrow \Pi_{h}^{\rm QLTZ} & & \downarrow \Pi_{h}^{0} & & \\
0 & \rightarrow & M_{h0}^\q & \xrightarrow{\bs{\mrm{curl}}_h} & \mathbf{V}_{h0}^{\rm QLTZ} & \xrightarrow{\mrm{div}_h} & \mathring{W}_h^\q  & \rightarrow & 0.
\end{array}
\end{equation}


%
%
%
%
%

\section{Finite elements and discrete Stokes complex on a mixed grid}
\label{sec:mixgrid}

In this section, we generalise the results in Section \ref{sec:stap} from quadrilateral grids to the mixed grid that consists  of both triangular and quadrilateral cells. The technical issues are the same as that in Section \ref{sec:stap}, and we list the main results and omit the details.


\subsection{Mixed triangulation with triangular and quadrilateral cells}

Let $\mathcal{T}_h$ be a shape-regular triangulation of domain $\Omega$, with the cells being triangles or convex quadrilaterals. Again, let $\mathcal{N}_h$ denote the set of all the vertices, $\mathcal{N}_h=\mathcal{N}_h^i\cup\mathcal{N}_h^b$, with $\mathcal{N}_h^i$ and $\mathcal{N}_h^b$ consisting of the interior vertices and the boundary vertices, respectively. Similarly, let $\mathcal{E}_h=\mathcal{E}_h^i\bigcup\mathcal{E}_h^b$ denote the set of all the edges, with $\mathcal{E}_h^i$ and $\mathcal{E}_h^b$ consisting of the interior edges and the boundary edges, respectively. For an edge $e$, $\mathbf{n}_e$ is a unit vector normal to $e$, and $\tau_e$ is a unit tangential vector of $e$ such that $\mathbf{n}_e\times\tau_e>0$. On the edge $e$, we use $\llbracket\cdot\rrbracket_e$ for the jump across $e$. 

Again, denote by $\mathfrak{F}$ the number of cells of the triangulation, denote by $\mathfrak{X}$, $\mathfrak{X}_I$, $\mathfrak{X}_B$ and $\mathfrak{X}_C$ the number of vertices, internal vertices, boundary vertices, and corner vertices, respectively, and denote by $\mathfrak{E}$, $\mathfrak{E}_I$ and $\mathfrak{E}_B$ the number of edges, internal edges, and boundary edges, respectively. Euler's formula states that $\mathfrak{F}+\mathfrak{X}=\mathfrak{E}+1$. In the remaining of this section, we use $Q$ to denote a quadrilateral cell, and $T$ for a triangular cell. This will not bring ambiguity according to the context. Denote by $\# Q$ and $\# T$ the number of quadrilateral and triangular cells, respectively.

\subsection{Finite element spaces on a mixed grid}

Associated with the triangulation, we define several finite element spaces for the stream function, the velocity and the pressure, respectively. Associated with the stream function, define
\begin{multline*}
 M_h^\m:=\{w_h\in L^2(\Omega):w_h|_Q\in P_Q^M,\ w_h|_T\in P_2(T),\\ 
\quad w_h(a)\ \mbox{is\ continuous\ at}\ a\in\mathcal{N}_h,\ \fint_e\partial_{\mathbf{n}_e}w_h\dS\ \mbox{is\ continuous\ on}\ e\in\mathcal{E}_h^i\}.\qquad
\end{multline*}
And, associated with $H^2_0(\Omega)$, define 
$$
M_{h0}^\m:=\{w_h\in M_h:w_h(a)=0\ \mbox{at}\ a\in\mathcal{N}_h^b,\ \fint_e\partial_{\mathbf{n}_e}w_h\dS=0\ \mbox{at}\ e\in\mathcal{E}_h^b\}.
$$
Define the interpolation $\Pi_h^{M,\m}:H^2(\Omega)\to M_h^\m$ by $\Pi_h^{M,\m}\varphi\in M_h^\m$, by
$$
\Pi_h^{M,\m}\varphi(a)=\varphi(a),\ \forall\,a\in \mathcal{N}_h, \ \mbox{and} \fint_e\partial_{\mathbf{n}_e}\Pi_h^{M,\m}\varphi\dS=\fint_e\partial_{\mathbf{n}_e}\varphi\dS, \ \forall\,e\in\mathcal{E}_h,
$$
for $\varphi\in H^2(\Omega)$. Then $\Pi_h^{M,\m}$ is well-defined, and $\Pi_h^{M,\m} H^2_0(\Omega)= M_{h0}^\m$.

Define associated with $H^1(\Omega)$
$$
V_h^\m:=\{w\in L^2(\Omega):w|_Q\in P_Q,\ w|_T\in P_1(T),\ \fint_e w\dS\ \mbox{is\ continuous\ on}\ e\in\mathcal{E}_h^i\},
$$
and associated with $H^1_0(\Omega)$,
$$
V_{h0}^\m:=\{w_h\in V_h:\fint_ew_h\dS=0,\ \mbox{for\ }e\in \mathcal{E}_h^b\}.
$$
Define $\mathbf{V}_h^\m=(V_h^\m)^2$ and $\mathbf{V}_{h0}^\m=(V_{h0}^\m)^2$. Define $\undertilde{\Pi}{}_h^{V,\m}:(H^1(\Omega))^2\to \mathbf{V}_h^\m$ by $\undertilde{\Pi}{}_h^{V,\m}\mathbf{w}\in \mathbf{V}_h^\m$ such that
$$
\fint_e\undertilde{\Pi}{}_h^{V,\m}\mathbf{w}\dS=\fint_e\mathbf{w}\dS,\ \forall\,e\in\mathcal{E}_h,\ \mbox{and}\,(\undertilde{\Pi}{}_h^{V,\m}\mathbf{w})|_Q=\undertilde{\Pi}{}_Q^{\rm QLTZ}\mathbf{w}
$$
for $\mathbf{w}\in (H^1(\Omega))^2$. Then $\undertilde{\Pi}{}_h^{V,\m}$ is well-defined, and $\undertilde{\Pi}{}_h^\m(H^1_0(\Omega))^2=\mathbf{V}_{h0}^\m$.

Define associated with $L^2(\Omega)$
$$
W_h^\m:=\{q_h\in L^2(\Omega):q_h|_Q\in P_1(Q),\ q_h|_T\in P_0(T)\},
$$
and associated with $L^2_0(\Omega)$, $\mathring{W}_h^\m:=W_h^\m\cap L^2_0(\Omega).$ Define $\Pi_h^{0,\m}$ the $L^2$-projection to $W_h^\m$. Then $\Pi_h^{0,\m} L^2_0(\Omega)=\mathring{W}_h^\m$.

Note that the finite element functions in the spaces $(M_h^\m,V_h^\m\times W_h^\m)$ coincide with the original Morley element functions and the nonconforming $P_1^2-P_0$ element functions if restricted on triangular cells, and with quadrilateral Morley element functions and the newly-developed Stokes element functions if restricted on quadrilateral cells.  The operators $\Pi_h^{M,\m}$, $\undertilde{\Pi}{}_h^{V,\m}$ and $\Pi_h^{0,\m}$ are all defined cell by cell. Roughly speaking, they are ``sum direct" of the interpolations with respect to the finite elements defined on individual cells.

\subsection{Finite element schemes for boundary value problems}

Since the nodal interpolation operators are locally defined, and sufficient weak continuity conditions have been imposed across the interface, these finite elements defined previously provide convergent finite element schemes for the specific boundary value problems. 

\paragraph{\textbf{Fourth order problem}} We consider the finite element problem: find $u_h\in M_{h0}^\m$, such that
\begin{equation}\label{eq:bihfepmix}
(\nabla_h^2u_h,\nabla_h^2v_h)=(f,v_h)\quad\forall\,v_h\in M_{h0}^\m.
\end{equation}
Since the average of the gradient of the $M_{h0}^\m$ functions at the internal edges are continuous, by standard technique\cite{Shi.Z1990,Wang.M;Shi.Z;Xu.J2007,Park.C;Sheen.D2013}, we have the convergence result below.
\begin{lemma}\label{lem:morleymix}
Let $u$ and $u_h$ be the solution of \eqref{eq:bihvp} and \eqref{eq:bihfepmix}, respectively.
\begin{enumerate}
\item Assume $u\in H^3(\Omega)\cap H^2_0(\Omega)$, then $|u-u_h|_{2,h}\lesssim h(|u|_{3,\Omega}+\|f\|_{0,\Omega})$.
\item If further $\Omega$ is a convex polygon, then $|u-u_h|_{1,h}\lesssim h^2(|u|_{3,\Omega}+\|f\|_{0,\Omega})$.
\end{enumerate}
\end{lemma}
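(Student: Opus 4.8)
The plan is to argue exactly as in the two pure cases, the triangular Morley element (Shi \cite{Shi.Z1990}) and the quadrilateral Morley element (Park-Sheen \cite{Park.C;Sheen.D2013}), via the second Strang lemma for nonconforming methods. Thus $|u-u_h|_{2,h}$ is controlled by the best-approximation error $\inf_{v_h\in M_{h0}^\m}|u-v_h|_{2,h}$ plus the consistency error $\sup_{0\ne w_h\in M_{h0}^\m}|(\nabla_h^2u,\nabla_h^2w_h)-(f,w_h)|/|w_h|_{2,h}$. For the first term I would use the cell-wise interpolation $\Pi_h^{M,\m}$: restricted to a triangle it is the classical Morley interpolation, restricted to a quadrilateral it is the Park-Sheen quadrilateral Morley interpolation, and in both cases $P_2$ is contained in the local shape function space. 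A Bramble-Hilbert argument on the reference cell, combined with the affine (resp.\ bilinear-type) scaling to a physical cell and the shape-regularity of $\mathcal{T}_h$, then yields $|u-\Pi_h^{M,\m}u|_{m,h}\lesssim h^{3-m}|u|_{3,\Omega}$ for $0\le m\le2$ (this is the interpolation estimate underlying Lemma \ref{lem:parksheen} on quadrilaterals, and the standard one on triangles).

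For the consistency error I would integrate $(\nabla_h^2u,\nabla_h^2w_h)$ by parts cell by cell against $f=\Delta^2u$. After the interior contributions cancel, what survives is the usual sum of edge integrals; using that $w_h$ is continuous at every vertex of $\mathcal{N}_h$, the terms carrying $\llbracket w_h\rrbracket_e$ and the tangential-derivative terms are handled as in the standard analysis, and the dominant part is $\sum_{e\in\mathcal{E}_h}\int_e\partial_{\mathbf{n}_e\mathbf{n}_e}u\,\llbracket\partial_{\mathbf{n}_e}w_h\rrbracket_e\dS$. The weak continuity built into $M_{h0}^\m$ — namely $\fint_e\partial_{\mathbf{n}_e}w_h\dS$ is continuous across each $e\in\mathcal{E}_h^i$ and vanishes on each $e\in\mathcal{E}_h^b$ — means $\fint_e\llbracket\partial_{\mathbf{n}_e}w_h\rrbracket_e\dS=0$, so one may subtract the edge average of $\partial_{\mathbf{n}_e\mathbf{n}_e}u$ on each $e$; a trace inequality, the Bramble-Hilbert lemma on the patch $\omega_e$ of the two cells sharing $e$, and an inverse estimate then bound the resulting term by $h\,|u|_{3,\omega_e}\,|w_h|_{2,\omega_e}$. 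Summing over edges and applying Cauchy-Schwarz gives consistency error $\lesssim h(|u|_{3,\Omega}+\|f\|_{0,\Omega})$, which together with the approximation bound proves (1). For (2) I would run an Aubin-Nitsche duality argument: introduce the auxiliary biharmonic problem with data dual to $u-u_h$ in the $H^1$-pairing, invoke the $H^3$-regularity valid on a convex polygon \cite{Park.C;Sheen.D2013}, and estimate both the primal and the dual nonconforming terms, each gaining one extra power of $h$, to obtain $|u-u_h|_{1,h}\lesssim h^2(|u|_{3,\Omega}+\|f\|_{0,\Omega})$.

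The only feature not already present in the two pure cases is an edge $e$ shared by a triangle $T$ and a quadrilateral $Q$. But the nodal parameters of the triangular and the quadrilateral Morley elements agree along such an edge (the two endpoint values and the averaged normal derivative), so the traces of $M_{h0}^\m$ functions across a mixed edge satisfy exactly the same weak-continuity identities as across a triangle-triangle or quadrilateral-quadrilateral edge, and every edgewise estimate used above is local to $\omega_e$ and insensitive to whether the neighbouring cells are triangles or quadrilaterals, as long as the constants are taken uniform over the shape-regular family. I therefore expect the main obstacle to be organisational rather than analytic: phrasing a single edge-lemma that simultaneously covers all three edge types, and verifying that each constant depends only on the shape-regularity indicator (and, for the duality step, on $\Omega$), so that no new $h$-dependence is introduced on the triangle-quadrilateral interfaces.
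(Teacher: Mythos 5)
Your proposal is correct and follows essentially the same route as the paper, which in fact gives no detailed proof at all: it merely observes that the averaged normal derivative of $M_{h0}^\m$ functions is continuous across internal edges and invokes the ``standard technique'' of \cite{Shi.Z1990,Wang.M;Shi.Z;Xu.J2007,Park.C;Sheen.D2013}, i.e.\ exactly the Strang-lemma/consistency-error/duality argument you spell out. Your closing observation --- that the triangular and quadrilateral Morley elements share the same nodal parameters on a common edge, so the weak-continuity identities and the local edge estimates are insensitive to the cell types on either side --- is precisely the point the paper is relying on when it asserts the result on mixed grids.
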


\paragraph{\textbf{Stokes problem}} Now we consider the finite element problem: find $(\mathbf{u}_h,p_h)\in\mathbf{V}_{h0}^\m\times \mathring{W}_h^\m$, such that
\begin{equation}\label{eq:stokesfepmix}
\left\{
\begin{array}{rll}
(\nabla_h\mathbf{u}_h,\nabla\mathbf{v}_h)+(\mathrm{div}_h\mathbf{v}_h,p_h)&=(\mathbf{f},\mathbf{v}_h) & \forall\,\mathbf{v}_h\in \mathbf{V}_{h0}^\m; \\ 
(\mathrm{div}_h\mathbf{u}_h,q_h)&=0 & \forall\,q_h\in \mathring{W}_h^\m.
\end{array}
\right.
\end{equation}
By the same technique as the proof of Lemma \ref{lem:discinfsup}, we obtain the lemma below.
\begin{lemma}
The inf-sup condition holds for $\mathbf{V}_{h0}^\m\times \mathring{W}_h^\m$ that
\begin{equation}\label{eq:discinfsupmix}
\inf_{q_h\in \mathring{W}_h^\m}\sup_{\mathbf{v}_h\in \mathbf{V}_{h0}^\m}\frac{(\mathrm{div}_h \mathbf{v}_h,q_h)}{\|\mathbf{v}_h\|_{1,h}\|q_h\|_{0,\Omega}}\geqslant C \mbox{(independent of $h$)}.
\end{equation}
Moreover, $\mathrm{div}_h\mathbf{V}_{h0}^\m=\mathring{W}_h^\m$, and $\mathbf{curl}_h\mathbf{V}_{h0}^\m=\mathring{W}_h^\m$.
\end{lemma}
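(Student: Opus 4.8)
The plan is to follow verbatim the strategy used for Lemma~\ref{lem:discinfsup}, now with the mixed-grid interpolant $\undertilde{\Pi}{}_h^{V,\m}$ in place of $\undertilde{\Pi}{}_h^{\rm QLTZ}$. Given $q_h\in\mathring{W}_h^\m\subset L^2_0(\Omega)$, I would invoke the continuous stability \eqref{eq:contsta} (equivalently the surjectivity of $\mathrm{div}$ in Lemma~\ref{lem:conbihsto}) to choose $\mathbf{w}\in(H^1_0(\Omega))^2$ with $\mathrm{div}\,\mathbf{w}=q_h$ and $\|\mathbf{w}\|_{1,\Omega}\lesssim\|q_h\|_{0,\Omega}$, and then set $\mathbf{w}_h:=\undertilde{\Pi}{}_h^{V,\m}\mathbf{w}\in\mathbf{V}_{h0}^\m$.

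The key is then to verify $\mathrm{div}_h\mathbf{w}_h=q_h$ cell by cell. On a quadrilateral cell $Q$ one has $(\undertilde{\Pi}{}_h^{V,\m}\mathbf{w})|_Q=\undertilde{\Pi}{}_Q^{\rm QLTZ}(\mathbf{w}|_Q)$, so by the divergence-preservation property of $\undertilde{\Pi}{}_Q^{\rm QLTZ}$ established in Section~\ref{sec:femecr} one gets $\int_Q\mathrm{div}\,\mathbf{w}_h\,q\,\dx=\int_Q\mathrm{div}\,\mathbf{w}\,q\,\dx=\int_Q q_h\,q\,\dx$ for every $q\in P_1(Q)$; since $\mathrm{div}(\mathbf{w}_h|_Q)\in P_1(Q)$ and $q_h|_Q\in P_1(Q)$, this forces $\mathrm{div}(\mathbf{w}_h|_Q)=q_h|_Q$. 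On a triangular cell $T$, $\mathbf{w}_h|_T\in(P_1(T))^2$ so $\mathrm{div}(\mathbf{w}_h|_T)\in P_0(T)$, and since $\undertilde{\Pi}{}_h^{V,\m}$ matches edge averages, the divergence theorem gives $|T|\,\mathrm{div}(\mathbf{w}_h|_T)=\int_{\partial T}\mathbf{w}_h\cdot\mathbf{n}\dS=\int_{\partial T}\mathbf{w}\cdot\mathbf{n}\dS=\int_T\mathrm{div}\,\mathbf{w}\,\dx=\int_T q_h\,\dx=|T|\,q_h|_T$, hence $\mathrm{div}(\mathbf{w}_h|_T)=q_h|_T$. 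Combining the two cases, $\mathrm{div}_h\mathbf{w}_h=q_h$ in $L^2(\Omega)$. Using then the $H^1$-stability of the interpolant (the mixed-grid analogue of Lemma~\ref{lem:interr}, which is a local estimate assembled cell by cell), $\|\mathbf{w}_h\|_{1,h}\lesssim\|\mathbf{w}\|_{1,\Omega}\lesssim\|q_h\|_{0,\Omega}$, the standard chain
\[
\sup_{\mathbf{v}_h\in\mathbf{V}_{h0}^\m}\frac{(\mathrm{div}_h\mathbf{v}_h,q_h)}{\|\mathbf{v}_h\|_{1,h}\|q_h\|_{0,\Omega}}\geq\frac{(\mathrm{div}_h\mathbf{w}_h,q_h)}{\|\mathbf{w}_h\|_{1,h}\|q_h\|_{0,\Omega}}=\frac{\|q_h\|_{0,\Omega}}{\|\mathbf{w}_h\|_{1,h}}\gtrsim 1
\]
yields \eqref{eq:discinfsupmix}.

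The same construction already produces a $\mathrm{div}_h$-preimage for an arbitrary $q_h\in\mathring{W}_h^\m$, so $\mathring{W}_h^\m\subset\mathrm{div}_h\mathbf{V}_{h0}^\m$; conversely, for $\mathbf{v}_h\in\mathbf{V}_{h0}^\m$ the function $\mathrm{div}_h\mathbf{v}_h$ is piecewise in $P_1$ on quadrilaterals and in $P_0$ on triangles, i.e.\ in $W_h^\m$, and integrating cell by cell, using continuity of edge averages across $\mathcal{E}_h^i$ and their vanishing on $\mathcal{E}_h^b$, gives $\int_\Omega\mathrm{div}_h\mathbf{v}_h\,\dx=0$, so $\mathrm{div}_h\mathbf{v}_h\in\mathring{W}_h^\m$; hence equality. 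The identity $\mathbf{curl}_h\mathbf{V}_{h0}^\m=\mathring{W}_h^\m$ follows identically, since $\mathbf{curl}$ is $\mathrm{div}$ composed with a fixed rotation of the vector field, $\mathbf{V}_{h0}^\m$ is invariant under that rotation, and $\mathbf{curl}(H^1_0(\Omega))^2=L^2_0(\Omega)$ by Lemma~\ref{lem:conbihsto}. The only point requiring care is the cell-by-cell verification $\mathrm{div}_h\mathbf{w}_h=q_h$: it rests on the pressure test space being exactly $P_1$ on quads (matched by the built-in $P_1$-preservation of $\undertilde{\Pi}{}_Q^{\rm QLTZ}$) and exactly $P_0$ on triangles (matched by edge-average preservation and the divergence theorem), together with uniform $H^1$-stability of the interpolant on the mixed grid. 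All of these ingredients are already available — from Section~\ref{sec:femecr} for quadrilaterals and from the classical Crouzeix--Raviart analysis on triangles — so no genuinely new difficulty arises, and the argument is essentially the bookkeeping needed to glue the two cell types together, exactly as anticipated by the remark preceding the statement.
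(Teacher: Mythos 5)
Your proposal is correct and follows essentially the same route as the paper, which simply states that the result is obtained ``by the same technique as the proof of Lemma \ref{lem:discinfsup}''; you have carried out that technique, with the only mixed-grid-specific detail being the $P_0$ divergence preservation on triangles via edge-average matching, handled correctly.
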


Again, since the edge average of $\mathbf{w}_h\in \mathbf{V}_{h0}^\m$ is continuous across internal edges, by the standard technique, we obtain the estimate below.
\begin{lemma}
Let $(\mathbf{u},p)$ and $(\mathbf{u}_h,p_h)$ be the solutions of \eqref{eq:stokesvp} and \eqref{eq:stokesfepmix}, respectively. Then $\mathrm{div}_h\mathbf{u}_h=0$. Moreover,
\begin{enumerate}
\item If $\mathbf{u}\in (H^2(\Omega)\cap H^1_0(\Omega))^2$ and $p\in H^1(\Omega)\cap L^2_0(\Omega)$, then 
$$
\|\textbf{u}-\textbf{u}_h\|_{1,h}+\|p-p_h\|_{0,\Omega}\lesssim h(|\textbf{u}|_{2,\Omega}+|p|_{1,\Omega});
$$
\item If $\Omega$ is convex, then $\|\textbf{u}-\textbf{u}_h\|_{0,\Omega}\lesssim h^2\|\mathbf{f}\|_{0,\Omega}$.
\end{enumerate}
\end{lemma}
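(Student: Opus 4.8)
The plan is to follow the template already established for the pure quadrilateral case in Section~\ref{sec:stap}, since by the paper's own remark the only genuinely new ingredient is the bookkeeping of the mixed grid. First I would establish $\mathrm{div}_h\mathbf{u}_h=0$: this is an immediate consequence of the stability result just quoted, namely that $\mathrm{div}_h\mathbf{V}_{h0}^\m=\mathring{W}_h^\m$, so that the discrete divergence-free constraint $(\mathrm{div}_h\mathbf{u}_h,q_h)=0$ for all $q_h\in\mathring{W}_h^\m$ together with $\mathrm{div}_h\mathbf{u}_h\in\mathring{W}_h^\m$ forces $\mathrm{div}_h\mathbf{u}_h=0$ pointwise. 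This replicates verbatim the argument from the corresponding theorem on quadrilateral grids.

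Next I would invoke Lemma~\ref{lem:clm}: condition \textbf{SC 1} holds because the inf-sup inequality \eqref{eq:discinfsupmix} is in hand and the coercivity on $\mathbf{Z}_h$ follows from a broken Poincar\'e--Friedrichs inequality, which is valid precisely because the edge averages of $\mathbf{V}_{h0}^\m$ functions are continuous across interior edges and vanish on boundary edges (so that no rigid-body-type kernel survives). Then the quasi-optimal error bound of Lemma~\ref{lem:clm} reduces everything to (i) the interpolation error $\inf_{\mathbf{v}_h}\|\mathbf{u}-\mathbf{v}_h\|_{1,h}$, (ii) the pressure approximation error $\inf_{q_h}\|p-q_h\|_0$, and (iii) the consistency error. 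For (i) I would use $\undertilde{\Pi}{}_h^{V,\m}\mathbf{u}$ as the comparison function; since this operator is defined cell by cell, coinciding on triangles with the nonconforming $P_1$ interpolant and on quads with $\undertilde{\Pi}{}_Q^{\rm QLTZ}$, and both carry first-order $H^1$ and second-order $L^2$ estimates (Lemma~\ref{lem:interr} on quads, classical Crouzeix--Raviart estimates on triangles), the mixed estimate $|\mathbf{u}-\undertilde{\Pi}{}_h^{V,\m}\mathbf{u}|_{1,h}\lesssim h|\mathbf{u}|_{2,\Omega}$ follows by summing cellwise. For (ii) the piecewise linear / piecewise constant space $W_h^\m$ contains $P_0$ on every cell and $P_1$ on quads, giving $\inf_{q_h}\|p-q_h\|_0\lesssim h|p|_{1,\Omega}$ by the standard local $L^2$-projection estimate.

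The consistency error (iii) is where the real work sits, and it is the step I expect to be the main obstacle. One must bound $\sup_{\mathbf{v}_h}\big((\nabla_h\mathbf{u},\nabla_h\mathbf{v}_h)+(\mathrm{div}_h\mathbf{v}_h,p)-(\mathbf{f},\mathbf{v}_h)\big)/\|\mathbf{v}_h\|_{1,h}$, which after integration by parts cellwise becomes a sum of edge terms $\sum_{e}\int_e(\partial_{\mathbf{n}_e}\mathbf{u}-p\,\mathbf{n}_e)\cdot\llbracket\mathbf{v}_h\rrbracket_e\dS$. The standard trick is to subtract the edge average of $\mathbf{v}_h$ (which is single-valued, hence its jump vanishes) and then apply a trace/scaling inequality to the remainder; this works uniformly over a mixed grid provided the shape-regularity of both triangular and quadrilateral cells is controlled, which is assumed. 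One must check that the patch argument and the Bramble--Hilbert estimate on the edge term go through identically when an edge separates a triangle from a quadrilateral — it does, because only the weak continuity of edge averages (shared by both element types) and local polynomial approximation are used, not any global structure. For the $L^2$ estimate in part 2 I would run the Aubin--Nitsche duality argument for the Stokes problem: set up the dual problem with datum $\mathbf{u}-\mathbf{u}_h$, use $H^2\times H^1$ elliptic regularity on the convex $\Omega$, and control the extra consistency terms in the duality pairing by the same edge-term technique combined with the second-order interpolation estimates; the divergence-free property $\mathrm{div}_h\mathbf{u}_h=0$ is what lets the pressure drop out of the duality bound, yielding the clean $h^2\|\mathbf{f}\|_{0,\Omega}$ rate.
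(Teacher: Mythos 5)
Your proposal is correct and follows essentially the same route the paper intends: the paper itself gives no detailed proof here, merely invoking Lemma~\ref{lem:clm}, the inf-sup condition \eqref{eq:discinfsupmix}, the surjectivity $\mathrm{div}_h\mathbf{V}_{h0}^\m=\mathring{W}_h^\m$ for the divergence-free claim, and ``the standard technique'' based on the continuity of edge averages for the consistency and duality estimates. Your write-up simply fills in those standard details (cellwise interpolation, edge-term consistency bound, Aubin--Nitsche duality) in the way the author clearly has in mind.
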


\subsection{Discrete Stokes complex on a mixed grid}
Direct calculation leads to that 
$$
\dim(\mathbf{V}_{h0}^\m)-\dim(\mathring{W}_{h0})=2(\mathfrak{E}_I+\# Q)-(3\#Q+\# T-1)=\mathfrak{E}_I+\mathfrak{X}_I=\dim(M_{h0}^\m).
$$
Therefore, we can apply the technique of the proof of Lemma \ref{lem:nullembd} and of the proof of Theorem \ref{thm:disStoSim1} to obtain the theorem below.

\begin{theorem}\label{thm:disStoSimmix}
The discrete Stokes complex holds as below:
$$
\begin{array}{ccccccccc}
0 & \longrightarrow & M_{h0}^\m & \xrightarrow{\bs{\mrm{curl}}_h} & \mathbf{V}_{h0}^\m & \xrightarrow{\mrm{div}_h} & \mathring{W}_h^\m  & \longrightarrow & 0.
\end{array}
$$
Moreover,
$$
\bs{\mrm{curl}}_h \Pi_{h}^{\bs{\mrm{M}},\m}=\undertilde{\Pi}{}_{h}^{\mrm{V},\m}\bs{\mrm{curl}}\ \mbox{on}\ H^2_0(\Omega),\quad\mbox{and}\quad \mrm{div}_h\undertilde{\Pi}{}_{h}^{\mrm{V},\m}=\Pi_h^{0,\m}\mrm{div}\ \mbox{on}\ (H^1_0(\Omega))^2.
$$
\end{theorem}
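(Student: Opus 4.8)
The plan is to mirror, mutatis mutandis, the arguments already carried out on pure quadrilateral grids in Section \ref{sec:stap}, now over the mixed triangulation $\mathcal{T}_h$. First I would establish the discrete inf-sup condition \eqref{eq:discinfsupmix} exactly as in Lemma \ref{lem:discinfsup}: given $q_h\in\mathring{W}_h^\m$, pick $\mathbf{w}\in(H^1_0(\Omega))^2$ with $\mathrm{div}\mathbf{w}=q_h$ and $\|\mathbf{w}\|_{1,\Omega}\lesssim\|q_h\|_{0,\Omega}$, and set $\mathbf{w}_h:=\undertilde{\Pi}{}_h^{V,\m}\mathbf{w}$. On each quadrilateral cell $Q$ the local interpolant $\undertilde{\Pi}{}_Q^{\rm QLTZ}$ preserves $\int_Q\mathrm{div}(\cdot)q\dx$ for $q\in P_1(Q)$, so since $(\mathrm{div}\mathbf{w})|_Q=q_h|_Q\in P_1(Q)$ we get $\mathrm{div}(\mathbf{w}_h|_Q)=q_h|_Q$; on each triangular cell $T$ the Crouzeix--Raviart interpolant preserves edge averages, hence $\int_T\mathrm{div}(\mathbf{w}_h-\mathbf{w})\dx=0$ and $\mathrm{div}(\mathbf{w}_h|_T)=q_h|_T\in P_0(T)$. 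Thus $\mathrm{div}_h\mathbf{w}_h=q_h$, and combined with the stability $\|\mathbf{w}_h\|_{1,h}\lesssim\|\mathbf{w}\|_{1,\Omega}$ (the analogue of Lemma \ref{lem:interr}, valid because the construction is cell-local and $P_1$ sits inside both local shape function spaces) this yields the bound and simultaneously the surjectivity statements $\mathrm{div}_h\mathbf{V}_{h0}^\m=\mathring{W}_h^\m$ and, by the same reasoning applied to $\mathrm{curl}$, $\mathrm{curl}_h\mathbf{V}_{h0}^\m=\mathring{W}_h^\m$.

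Next I would reprove the exactness at $\mathbf{V}_{h0}^\m$, which is the content of the analogue of Lemma \ref{lem:nullembd}: $\mathbf{curl}_h M_{h0}^\m=\widetilde{\mathbf{V}}_{h0}^\m:=\{\mathbf{w}_h\in\mathbf{V}_{h0}^\m:\mathrm{div}_h\mathbf{w}_h=0\}$. The inclusion $\subseteq$ is immediate since $\mathrm{div}_h\mathbf{curl}_h=0$ cellwise and one checks that $\mathbf{curl}_h$ maps the Morley continuity (vertex values continuous, edge averages of normal derivative continuous) precisely into the QLTZ/CR continuity (edge averages continuous) — this is the cellwise identity already verified in the proof of Theorem \ref{thm:disStoSim1}, which is purely local and works verbatim on triangles too since on a triangle the classical Morley element maps under $\mathbf{curl}$ onto the Crouzeix--Raviart element (cf. \cite{Falk.R;Morley.E1990}). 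For the reverse inclusion it suffices, as in Lemma \ref{lem:nullembd}, to match dimensions. Using the dimension counts $\dim(\mathbf{V}_{h0}^\m)=2(\mathfrak{E}_I+\#Q)$ (each component: one edge-average degree of freedom per interior edge, plus one interior bubble $\phi_0$ per quadrilateral cell) and $\dim(\mathring{W}_h^\m)=3\#Q+\#T-1$, the displayed computation gives $\dim(\widetilde{\mathbf{V}}_{h0}^\m)=\mathfrak{E}_I+\mathfrak{X}_I$; and $\dim(M_{h0}^\m)=\mathfrak{E}_I+\mathfrak{X}_I$ as well (interior vertex values plus interior edge normal-derivative averages). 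Since $\mathbf{curl}_h$ is injective on $M_{h0}^\m$ (a function whose cellwise $\mathbf{curl}$ vanishes is cellwise constant, hence globally constant by vertex continuity, hence zero by the boundary conditions), $\dim(\mathbf{curl}_h M_{h0}^\m)=\mathfrak{E}_I+\mathfrak{X}_I$, and the inclusion is an equality. Together with the surjectivity of $\mathrm{div}_h$ this gives the exact sequence claimed in Theorem \ref{thm:disStoSimmix}.

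Finally, the commuting-diagram identities follow by the same cellwise computation as in the proof of Theorem \ref{thm:disStoSim1}. For $\varphi\in H^2_0(\Omega)$ and any edge $e$, decomposing $\mathbf{curl}$ into normal and tangential parts and using that $\Pi_h^{M,\m}$ reproduces vertex values and edge averages of the normal derivative, one gets $\fint_e\mathbf{curl}_h\Pi_h^{M,\m}\varphi\dS=\fint_e\mathbf{curl}\,\varphi\dS$ (the triangular case is identical since there too the degrees of freedom are vertex values and edge normal-derivative averages); since additionally $\mathrm{div}_h\mathbf{curl}_h\Pi_h^{M,\m}\varphi=0=\mathrm{div}_h\undertilde{\Pi}{}_h^{V,\m}\mathbf{curl}\,\varphi$, the interpolant $\undertilde{\Pi}{}_h^{V,\m}$ is determined by its edge averages together with the cellwise divergence constraint, giving $\mathbf{curl}_h\Pi_h^{M,\m}=\undertilde{\Pi}{}_h^{V,\m}\mathbf{curl}$. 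The identity $\mathrm{div}_h\undertilde{\Pi}{}_h^{V,\m}=\Pi_h^{0,\m}\mathrm{div}$ follows because on each cell $\undertilde{\Pi}{}_h^{V,\m}$ preserves the $L^2$-pairing of the divergence against $P_1(Q)$ (on quadrilaterals) resp. $P_0(T)$ (on triangles), which is exactly the local $L^2$-projection onto $W_h^\m$. The main obstacle, such as it is, is bookkeeping rather than conceptual: one must be careful that the triangular constituents (classical Morley, Crouzeix--Raviart, $P_0$) interface correctly with the quadrilateral constituents through the shared edge degrees of freedom, so that the global spaces have the stated dimensions and the local identities patch together — but since all the interpolation operators are defined cell by cell and the imposed interface continuity (vertex values; edge averages of normal derivative; edge averages) is the same on both cell types, this patching is automatic.
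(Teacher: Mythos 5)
Your proposal is correct and follows essentially the same route as the paper, which proves the theorem by exhibiting the dimension count $\dim(\mathbf{V}_{h0}^\m)-\dim(\mathring{W}_h^\m)=2(\mathfrak{E}_I+\# Q)-(3\#Q+\# T-1)=\mathfrak{E}_I+\mathfrak{X}_I=\dim(M_{h0}^\m)$ and then invoking verbatim the techniques of Lemma \ref{lem:nullembd} and Theorem \ref{thm:disStoSim1}. You simply spell out those reused arguments in full (including the injectivity of $\bs{\mrm{curl}}_h$ on $M_{h0}^\m$, which the paper leaves implicit), so there is nothing to flag.
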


Again, the theorem can be written as the exact sequence and the commutative diagram below:
\begin{equation}
\begin{array}{ccccccccc}
0 & ~~~\longrightarrow~~~ & H^2_0(\Omega) & ~~~\xrightarrow{\bs{\mrm{curl}}}~~~ & (H^1_0(\Omega))^2 & ~~~\xrightarrow{\mrm{div}}~~~ & L^2_0(\Omega)  & ~~~\longrightarrow~~~ & 0  \\
 & & \downarrow \Pi_{h}^{\bs{\mrm{M}},\m} & & \downarrow \undertilde{\Pi}{}_{h}^{\mrm{V},\m} & & \downarrow \Pi_{h}^{0,\m} & & \\
0 & \longrightarrow & M_{h0}^\m & \xrightarrow{\bs{\mrm{curl}}_h} & \mathbf{V}_{h0}^\m & \xrightarrow{\mrm{div}_h} & \mathring{W}_h^\m & \longrightarrow & 0.
\end{array}
\end{equation}

\section{Concluding remarks}
\label{sec:con}
{
In this paper, we construct stable finite element pairs that satisfy the stability conditions both \textbf{SC 1} and \textbf{SC 2} on grids that admit triangular and general convex quadrilateral cells, namely, the pair satisfies the inf-sup stability condition, and the restriction of the solution to an element is exactly divergence-free and the scheme can be seen as a mass conservative one. Different from most existing finite element pairs on quadrilateral grids in the literature, the construction of the newly-developed quadrilateral finite element spaces does not rely on a rectangle reference cell, and the finite element spaces thus consist of piecewise polynomials only. Discrete Stokes complexes are constructed associatedly.
}

As the constraint of divergence-free is imposed piecewisely, this finite element would have potential applications for the parametric related problems \cite{Mardal.K;Tai.X;Winther.R2002,Xie.X;Xu.J;Xue.G2008}. The exact sequence property provides a precise description of the kernel space involved in the Stokes problem, and will also help to design preconditioners and solvers for the resulting linear systems \cite{Feng.C;Xu.J;Zhang.S2013,Hiptmair.R;Xu.J2007,Xie.X;Xu.J;Xue.G2008,Mardal.K;Schoberl.J;Winther.R2012}. These will be discussed in future works.

As the finite elements constructed in this present paper fall into the category of the nonconforming type, an issue is that the uniform Korn's inequality would fail\cite{Knoblock.P;Tobiska.L2005}. This issue will be discussed in future works. We also remark here that the piecewise mass conservation property of the finite element pair makes it potentially one fit for the elasticity problem, and we refer to \cite{Arnold.D1994} for a relevant discussion.



\begin{thebibliography}{000}

\bibitem{Arnold.D1994}
\newblock Arnold, D.:
\rm{On nonconforming linear-constant elements for some variants of the Stokes equations}, Istit. Lombardo Accad. Sci. Lett. Rend. A \textbf{127}, 83--93(1994)




\bibitem{Arnold.D;Falk.R;Winther.R2006m}
\newblock Arnold, D.N., Falk, R.S., Winther, R.:
\rm{Differential complexes and stability of finite
element methods. I. The de Rham complex}. In: Arnold, D.N. et al. (eds.): Compatible
spatial discretizations. (The IMA Volumes in Mathematics and its Applications 142)
Berlin: Springer (2006), pp. 23--46

\bibitem{Arnold.D;Falk.R;Winther.R2006}
\newblock Arnold, D.N., Falk, R.S., Winther, R.: 
\rm{Finite element exterior calculus, homological
techniques, and applications}. Acta Numerica 15, 1--155 (2006)

\bibitem{Arnold.D;Qin.J1992}
\newblock Arnold, D. N., Qin, J.: 
\rm{Quadratic velocity/linear pressure Stokes elements}, in Proceedings of Advances in Computer Methods for Partial Differential Equations VII, R. Vichnevetsky and R. S. Steplemen, eds., AICA, 1992.

\bibitem{Arnold.D;Scott.L;Vogelius.M1988}
\newblock Arnold, D. N., Scott, L.R., Vogelius, M.:
\rm{Regular inversion of the divergence operator with Dirichlet conditions on a polygon}, Ann. Sc. Norm. Super Pisa Cl. Sci. (5) \textbf{15}, 169--192(1988) 


\bibitem{Auricchio.F;daVeiga.L;Lovadina.C;Reali.A2010}
\newblock Auricchio, F., Beir\~ao da Veiga, L., Lovadina, C., Reali, A.:
\rm{The importance of the exact satisfaction of the incompressibility constraint in nonlinear elasticity: mixed FEMs versus NURBS-based approximations}, Comput. Methods Appl. Mech. Engrg. \textbf{199}, 314--323(2010)

\bibitem{Babuska.I1973}
\newblock Babu\v{s}ka, I.:
\rm{The finite element method with Lagrangian multipliers}, Numer. Math. \textbf{20}, 179--192(1973)

\bibitem{Bejanov.B;Guermond.J;Minev.P2005}
\newblock Bejanov, B., Guermond, J.-L., Minev, P. D.:
\rm{A locally DIV-free projection scheme for incompressible flows based on non-conforming finite elements}, Int. J. Numer. Meth. Fluids \textbf{49} 549--568(2005)

\bibitem{Bochev.P;Lai.J;Olson.L2013}
\newblock Bochev, P., Lai, J., Olson, L.:
\rm{A non-conforming least-squares finite element method for
incompressible fluid flow problems}, Int. J. Numer. Meth. Fluids \textbf{72}, 375--402(2013)

\bibitem{Boffi.D;Brezzi.F;Fortin.M2008}
\newblock Boffi, D., Brezzi, F., Fortin, M.: 
\rm{Finite elements for the Stokes problem}, in Mixed Finite Elements, Compatibility Conditions, and Applications, C.I.M.E. Summer School, Springer-Verlag, Berlin, 2008.

\bibitem{Boffi.D;Cavallini.N;Gardini.F;Gastaldi.L2012}
\newblock Boffi, D., Cavallini, N., Gardini, F., Gastaldi, L.: 
\rm{Local mass conservation of Stokes finite elements}, J. Sci. Comput. \textbf{52}, 383--400(2012)

\bibitem{Bolton.P;Thatcher.R2005}
\newblock Bolton, P., Thatcher, R.W.:
\rm{On mass conservation in least-squares methods}, Journal of Computational Physics \textbf{203}, 287--304(2005)


\bibitem{Brezzi1974}
\newblock Brezzi, F.:
\rm{On the existence, uniqueness and approximation of saddle-point problems arising from Lagrange multipliers}, R.A.I.R.O. Anal. Numer. R2, 129--151(1974)

\bibitem{Brezzi.F;Fortin.M1991}
\newblock Brezzi, F., Fortin, M.: 
\rm{Mixed and Hybrid Finite Element Methods}, Springer Ser. Comput. Math., 15, Springer-Verlag, New York, 1991.

\bibitem{Burman.E;Linke.A2008}
\newblock Burman, E., Linke, A.: 
\rm{Stabilized finite element schemes for incompressible flow using Scott-Vogelius elements}, Applied Numerical Mathematics \textbf{58}, 1704--1719(2008)

\bibitem{Carrero.J;Cockburn.B;Schotzau.D2006}
\newblock Carrero, J., Cockburn, B., Sch\"{o}tzau, D.: 
\rm{Hybridized globally divergence-free LDG methods. I. The Stokes problem}, Math. Comp. \textbf{75}, 533--563(2006)

\bibitem{Case.M;Ervin.V;Linke.A;Rebholz.L2011}
\newblock Case, M.A., Ervin, V.J., Linke, A., Rebholz, L.G.: 
\rm{A connection between Scott-Vogelius and grad-div stabilized Taylor-Hood FE approximations of the Navier-Stokes equations}, SIAM J. Numer. Anal. \textbf{49}, 1461--1481(2011)

\bibitem{Chang.C;Nelson.J1997}
\newblock Chang, C., Nelson, J.:
\rm{Least-Squares finite element method for the Stokes problem with zero residual of mass conservation}, SIAM J. Numer. Anal. \textbf{34}, 480--489(1997)


\bibitem{Cockburn.B;Kanschat.G;Schotzau.D2007}
\newblock Cockburn, B., Kanschat, G., Sch\"otzau, D.: 
\rm{A note on discontinuous Galerkin divergence free solutions of the Navier-Stokes equations}, J. Sci. Comput. \textbf{31}, 61--73(2007)

\bibitem{Cockburn.B;Kanschat.G;Schotzau.D2004}
\newblock Cockburn, B., Kanschat, G., Sch\"otzau, D.:
\rm{A locally conservative LDG method for the incompressible Navier-Stokes equations}, Math. Comp. \textbf{74}, 1067--1095(2004)

\bibitem{Cockburn.B;Kanschat.G;Schotzau.D2005}
\newblock Cockburn, B., Kanschat, G., Sch\"otzau, D.: 
\rm{The local discontinuous Galerkin method for linearized incompressible fluid flow: a review}, Computers \& Fluids \textbf{34}, 491--506(2005)

\bibitem{Cousins.B;Borne.S;Linke.A;Rebholz.L;Wang.Z2013}
\newblock Cousins, B.R., Brone, S. L., Linke, A., Rebholz, L.G., Wang, Z.:
\rm{Efficient Linear Solvers for Incompressible Flow
Simulations using Scott-Vogelius Finite Elements}, Numer Methods Partial Differential Eq \textbf{29}, 1217--1237(2013)

\bibitem{Crouzeix.M;Raviart.P1973}
\newblock Courzeix, M., Raviart, P.-A.:
\rm{Conforming and non conforming finite element methods for solving the stationary Stokes equations} R.A.I.R.O. R3, 33--76(1973)

\bibitem{Evans.J;Hughes.T2012}
\newblock Evans, J.A., Hughes, T.J.R.: 
\rm{Isogeometric divergence-conforming B-splines for the steady Navier-Stokes equations}, Math. Models Methods Appl. Sci. \textbf{23}, 1421--1478(2013)

\bibitem{Evans.J;Hughes.T2013}
\newblock Evans, J.A., Hughes, T.J.R.:
\rm{Isogeometric divergence-conforming B-splines for the unsteady Navier-Stokes equations}, J. Comput. Phys. \textbf{241}, 141--167(2013)

\bibitem{Falk.R;Morley.E1990}
\newblock Falk, R., Morley, E.:
\rm{Equivalence of finite element methods for problems in elasticity}, SIAM J. Numer. Anal. \textbf{27}, 1486--1505(1990)

\bibitem{Falk.R;Neilan.M2013}
\newblock Falk, R., Neilan, M.:
\rm{Stokes complexes and the construction of stable finite elements with pointwise mass conservation}, SIAM J. NUMER. ANAL. \textbf{51}, 1308--1326(2013)

\bibitem{Feng.C;Xu.J;Zhang.S2013}
\newblock Feng, C., Xu, J., Zhang, S.:
\rm{Optimal solver for Morley element problem for biharmonic equation on shape-regular grids}, preprint.

\bibitem{Fortin.M;Soulie.M1983}
\newblock Fortin, M., Soulie, M.:
\rm{A non-conforming piecewise quadratic finite element on triangles}, International journal for numerical methods in engineering, \textbf{19}, 505--520(1983)

\bibitem{Girault.V;Raviart.P}
\newblock Girault, V, Raviart, P.-A.: 
\rm{Finite Element Methods for the NavierÐStokes Equations},
Springer-Verlag, Berlin, 1986.

\bibitem{Guzman.J;Neilan.M}
\newblock Guzm\'an, J., Neilan, M.: 
\rm{Conforming and divergence-free Stokes elements on general triangular meshes}, Math. Comp., in press.

\bibitem{Han.H1984}
\newblock Han, H.:
\rm{Nonconforming Elements In The Mixed Finite Element Method}, J. Comp. Math. \textbf{2}, 223--233(1984)

\bibitem{Heys.J;Lee.E;Manteuffel.T;McCormick.S2006}
\newblock Heys, J.J., Lee, E., Manteuffel, T.A., McCormick, S.F.:
\rm{On mash-conserving least-squares methods}, SIAM J. SCI. COMPUT. \textbf{28}, 1675--1693(2006)

\bibitem{Hiptmair.R;Xu.J2007}
\newblock Hiptmair, R., Xu, J.:
\rm{Nodal auxiliary space preconditioning in $H(\mathbf{curl})$ and $H(\mathrm{div})$ spaces}, SIAM J. Numer. Anal. \textbf{45}, 2483--2509(2007)


\bibitem{Huang.Y;Zhang.S2011}
\newblock Huang, Y., Zhang, S.,
\rm{A lowest order divergence-free finite element on rectangular grids}, Frontiers of Mathematics in China \textbf{6}, 253--270(2011)

\bibitem{Kouhia.K;Stenberg.R1995}
\newblock Kouhia, R., Stenberg, R.: 
\rm{A linear nonconforming finite element method for nearly incompressible elasticity and Stokes flow}, Comput. Methods Appl. Mech. Engrg. \textbf{124}, 195--212(1995)

\bibitem{Knoblock.P;Tobiska.L2005}
\newblock Knobloch, P., Tobiska, L.:
\rm{On KornÕs first inequality for quadrilateral nonconforming finite elements of first order approximation properties}, Int. J. Numer. Anal. Modeling \textbf{2}, 439--458(2005) 




\bibitem{Lin.Q;Tobiska.L;Zhou.A2005}
\newblock Lin, Q., Tobiska, L., Zhou, A.:
\rm{Superconvergence and extrapolation of non-conforming low order finite elements applied to the Poisson equation}, IMA Journal of Numerical Analysis, \textbf{25} 160--181(2005)

\bibitem{Linke.A2008}
\newblock Linke, A.: 
\rm{Divergence-Free Mixed Finite Elements for the Incompressible Navier-Stokes Equation}, Ph.D. thesis, University of Erlangen, 2008.

\bibitem{Linke.A2009}
\newblock Linke, A.: 
\rm{Collision in a cross-shaped domain: A steady 2d Navier-Stokes example demonstrating the importance of mass conservation in CFD}, Comp. Meth. Appl. Mech. Eng. \textbf{198}, 3278--3286(2009)

\bibitem{Linke.A;Matthies.G;Tobiska.L2008}
\newblock Linke, A., Matthies, G., Tobiska, L., 
\rm{Non-nested multi-grid solvers for mixed divergence free scott-vogelius discretizations}, Computing, \textbf{83}, 87--107(2008)

\bibitem{Mardal.K;Tai.X;Winther.R2002}
\newblock Mardal, K.A., Tai, X.-C., Winther, R.: 
\rm{A robust finite element method for Darcy-Stokes flow}, SIAM J. Numer. Anal. \textbf{40}, 1605--1631(2002)

\bibitem{Mardal.K;Schoberl.J;Winther.R2012}
\newblock Mardal, K.A., Sch\"oberl, J., Winther, R.: 
\rm{A Uniform Inf-Sup Condition with Applications to Preconditioning}, preprint, arXiv:1201.1513 [math.NA], 2012.

\bibitem{Morley.L1968}
Morley, L.S.D.:
\rm{The triangular equilibrium element in the solution of plate bending problems}, Aeronautical Quarterly \textbf{19}, 149--169(1968)

\bibitem{Olshanskii.M;Reusken.A2004}
\newblock Olshanskii, M.A., Reusken, A.: 
\rm{Grad-div stabilization for Stokes equations}, Math.Comp. \textbf{73}, 1699--1718(2004)

\bibitem{Park.C;Sheen.D2003} 
\newblock Park, C., Sheen, D.: 
\rm{P1-nonconforming quadrilateral finite element methods for second-order elliptic problems}, SIAM J. Numer. Anal. \textbf{41}, 624--640(2003)

\bibitem{Park.C;Sheen.D2013}
\newblock Park, C., Sheen, D.:
\rm{A quadrilateral Morley element for biharmonic equations}, Numer. Math. \textbf{124}, 395--413(2013)



\bibitem{Qin.J1994}
\newblock Qin, J.: 
\rm{On the Convergence of Some Low Order Mixed Finite Elements for Incompressible Fluids}, Ph.D. Thesis, Penn State University, DepaHment of Mathematics, (1994)

\bibitem{Rannacher.R2000}
\newblock Rannacher, R.:
\rm{Finite Element Methods for the Incompressible Navier-Stokes Equations}, in Fundamental Directions in Mathematical Fluid Mechanics, editors Giovanni P. Galdi, John G. Heywood and Rolf Rannacher,  Springer, 191--293 (2000).

\bibitem{Scott.L;Vogelius.M1985}
\newblock Scott, L.R., Vogelius, M.: 
\rm{Norm estimates for a maximal right inverse of the divergence operator in spaces of piecewise polynomials}, RAIRO, Modelisation Math. Anal. Numer. \textbf{19}, 111--143(1985)

\bibitem{Scott.L;Vogelius.M1985LAM}
\newblock Scott, L.R., Vogelius, L.R.: 
\rm{Conforming finite element methods for incompressible and nearly incompressible continua}, in Lect. Appl. Math., 22 (1985), pp. 221--244.

\bibitem{Shi.D;Zhang.Y2006}
\newblock Shi, D.-Y., Zhang, Y.-R.:
\rm{A nonconforming anisotropic finite element approximation with moving grids for Stokes problem}, Journal of Computational Mathematics, \textbf{24}, 561--578(2006)

\bibitem{Shi.Z1990}
\newblock Shi, Z.C.,
\rm{On the error estimates of Morley element}, Numer. Math. Sin. \textbf{12}, 113--118(1990) (in Chinese)

\bibitem{Shi.Z;Wang.M2013mono}
\newblock Shi, Z.C., Wang, M.: 
\rm{Finite element methods}, Science Press, Beijing, 2013.

\bibitem{Tai.X;Winther.R2006}
\newblock Tai, X.-C., Winther, R.: 
\rm{A discrete de Rham complex with enhanced smoothness}, Calcolo \textbf{43}, 287--306(2006)

\bibitem{Vogelius.M1983}
\newblock Vogelius, M.: 
\rm{A right-inverse for the divergence operator in spaces of piecewise polynomials. Application to the p-version of the finite element method}, Numer. Math. \textbf{41}, 19--37(1983)

\bibitem{Wang.M;Shi.Z;Xu.J2007}
\newblock Wang, M., Shi, Z.C., Xu, J.:
\rm{Some n-rectangle nonconforming elements for fourth order elliptic equations}, J. Comput. Math. \textbf{25}, 408--420(2007)

\bibitem{Wang.M;Xu.J2006}
\newblock Wang, M., Xu, J.:
\rm{The Morley element for fourth order elliptic equations in any dimensions}, Numer. Math. \textbf{103}, 155--169(2006)


\bibitem{WangXu2012}
Wang, M., Xu, J.:
\rm{Minimal finite element spaces for 2m-th-order partial differential equations in $\mathbb{R}^n$}, Math. Comp. \textbf{82}, 25--43(2013)

\bibitem{Tai.X;Winther.R2006}
\newblock Tai, X.-C., Winther, R.: 
\rm{A discrete de Rham complex with enhanced smoothness}, Calcolo \textbf{43}, 287--306(2006)

\bibitem{Xie.X;Xu.J;Xue.G2008}
\newblock Xie, X., Xu, J., Xue, G.:
\rm{Uniformly-stable finite element methods for Darcy-Stokes-Brinkman models}, Journal of Computational Mathematics \textbf{26}, 437--455(2008)


\bibitem{Zhang.S2009}
\newblock Zhang, S.:
\rm{A family of $Q_{k+1,k}\times Q_{k,k+1}$ divergence-free finite elements on rectangular grids}, SIAM J. Numer. Anal. \textbf{47} 2090--2107(2009)

\end{thebibliography}
\end{document}